\DeclareMathOperator{\Aut}{Aut}
\DeclareMathOperator{\rep}{rep}
\DeclareMathOperator{\Rep}{Rep}
\DeclareMathOperator{\Num}{Num}
\DeclareMathOperator{\Inj}{Inj}
\DeclareMathOperator{\supp}{supp}
\DeclareMathOperator{\fix}{fix}
\DeclareMathOperator{\Diag}{Diag}
\DeclareMathOperator{\All}{All}
\DeclareMathOperator{\wt}{wt}
\newtheorem{theorem}{Theorem}[section]
\newtheorem{lemma}[theorem]{Lemma}
\newtheorem{corollary}[theorem]{Corollary}
\theoremstyle{definition}
\newtheorem{remark}[theorem]{Remark}
\newtheorem{definition}[theorem]{Definition}
\renewcommand{\leq}{\leqslant}
\renewcommand{\geq}{\geqslant}
\numberwithin{equation}{section}
\begin{document}

\title[Diagonally Neighbour Transitive Codes and Frequency Permutation
  Arrays] 
      {Diagonally Neighbour Transitive Codes\\and Frequency Permutation
        Arrays} 
\author{Neil I. Gillespie and Cheryl E. Praeger}
\address{[Gillespie and Praeger] Centre for the Mathematics of Symmetry and Computation\\
School of Mathematics and Statistics\\
The University of Western Australia\\
35 Stirling Highway, Crawley\\
Western Australia 6009\\
[Praeger] also affiliated with King Abdulaziz University, Jeddah, Saudi Arabia.}

\email{neil.gillespie@graduate.uwa.edu.au,
  cheryl.praeger@uwa.edu.au}

\begin{abstract}  Constant composition codes have been proposed as
  suitable coding schemes to solve the narrow band and impulse noise 
  problems associated with powerline communication.  In particular, a
  certain class of constant composition codes called frequency
  permutation arrays have been suggested as ideal, in some sense, for
  these purposes.  In this paper we characterise a family of
  neighbour transitive codes in Hamming graphs in which frequency permutation 
  arrays play a central rode.  We also classify all the permutation codes 
  generated by groups in this family.
\end{abstract}

\thanks{{\it Date:} draft typeset \today\\
{\it 2000 Mathematics Subject Classification:} 05E20, 20B25, 94B60.\\
{\it Key words and phrases: powerline communication, constant
  composition codes, frequency permutation arrays, neighbour
  transitive codes, permutation codes, automorphism groups} }

\maketitle

\section{Introduction}

Powerline communication has been proposed as a solution to
the ``last mile problem'' in the delivery of fast and reliable
telecommunications at the lowest cost \cite{hanvinck,stateoftheart}.  Any
coding scheme designed for powerline communication must maintain a constant power output, 
while at the same time combat both \emph{permanent narrow band noise} and \emph{impulse noise}, as
well as the usual white Gaussian/background noise \cite{chu1,hanvinck,stateoftheart}.  
Addressing the last of these, the authors introduced
\emph{neighbour transitive codes} (see below) as a group
theoretic analogue to the assumption that white Gaussian noise affects
symbols in codewords independently at random \cite{ngpaper} - an
assumption often made in the theory of error-correcting codes 
\cite[p.5]{pless}.  To deal with the other noise considerations in powerline
communication, \emph{constant composition codes} (CCC) have been proposed
as suitable coding schemes \cite{chu1,chu2} - these codes are of length $m$ over
an alphabet of size $q$ and have the property that each codeword
has $p_i$ occurrences of the $i$th letter of the alphabet, where
the $p_i$ are positive integers such that $\sum p_i=m$.  It is also
suggested in \cite{chu1} that constant composition codes where the
$p_i$ are all roughly $m/q$ are particularly well suited
for powerline communication.  Constant composition codes where each
letter occurs $m/q$ times in each codeword are called \emph{frequency
  permutation arrays}, and were introduced in \cite{sophie1}.  In this paper we
characterise a family of neighbour transitive codes in which frequency permutation arrays play a central role, and
we classify the subfamily consisting of \emph{permutation codes} generated by groups (each of
which is associated with a $2$-transitive permutation group).  

We consider a code of length $m$ over an alphabet $Q$ of
size $q$ to be a subset of the vertex set of the Hamming graph $\Gamma=H(m,q)$,
which has automorphism group $\Aut(\Gamma)\cong S_q^m\rtimes S_m$.  
We define the \emph{automorphism group of a code $C$} to be the setwise
stabiliser of $C$ in $\Aut(\Gamma)$, and we denote it by $\Aut(C)$ (and note that
this is a more general notion than is sometimes used in the literature).  We
define the \emph{the set of neighbours of $C$} to be the set $C_1$ of vertices in
$\Gamma$ that are not codewords, but are adjacent to at least one codeword in $C$.  
We say $C$ is \emph{$X$-neighbour transitive}, or simply \emph{neighbour
transitive}, if there exists a group $X$ of automorphisms such that both $C$ and $C_1$ are $X$-orbits.

Let $\alpha$ be a vertex in $H(m,q)$, and suppose $\{a_1,\ldots,a_k\}$ is
the set of letters that occur in $\alpha$.  The \emph{composition of $\alpha$} 
is the set \begin{equation}\label{compo}Q(\alpha)=\{(a_1,p_1),\ldots,(a_k,p_k)\},\end{equation} 
where the $p_i$ are positive integers and there are exactly $p_i$ occurrences of
the letter $a_i$ in the codeword $\alpha$.  Also let $\mathcal{I}(\alpha)=\{p_1,\ldots,p_k\}$, which can be a multi-set.  
It follows from the definition that, for a constant composition code, $k=q$ and $Q(\alpha)=Q(\beta)$ for all codewords $\alpha,\beta$.
As such, we can talk of the \emph{composition of a constant composition code}, 
which is equal to $Q(\alpha)$ for each codeword $\alpha$.  Now, for a set $\mathcal{I}$ of $k$ positive integers that
sum to $m$, with $k\leq q$, let $\Pi(\mathcal{I})$ be the set of vertices $\alpha$ in $H(m,q)$ with $\mathcal{I}(\alpha)=\mathcal{I}$.    
Then, for any constant composition code $C$,
there exists a set $\mathcal{I}$ of $q$ positive integers such that $C\subseteq\Pi(\mathcal{I})$.  

As automorphisms of a CCC must leave its composition invariant, it is 
natural to ask what types of automorphisms might do this, particularly as we are interested
in neighbour transitive CCC's.  The group $S_q$ (which we identify with the Symmetric group of $Q$) induces a 
faithful action on the vertices of $\Gamma$ in which elements of $S_q$ act naturally on each of the $m$ entries of
a vertex.  We denote the image of $S_q$ under this action by $\Diag_m(S_q)$ (since it is a diagonal subgroup
of the base group $S_q^m$ of $\Aut(\Gamma)$, see \eqref{diagdef}).  
It follows (from Lemma \ref{num}) that $\Pi(\mathcal{I})$ is left invariant under $\Diag_m(S_q)$. 
Similarly, the group $L$ of all permutations of entries fixes $\Pi(\mathcal{I})$ setwise.
(This holds because any permutation of the entries of a vertex $\alpha$ is a rearrangement
of the letters occurring in $\alpha$, leaving the composition $Q(\alpha)$ unchanged.)
Moreover, the group $\langle \Diag_m(S_q),L\rangle=\Diag_m(S_q)\rtimes L$ is the largest subgroup
of $\Aut(\Gamma)$ that leaves invariant $\Pi(\mathcal{I})$ for all $\mathcal{I}$ (for example
no other element of $\Aut(\Gamma)$ fixes $\Pi(\{m\})$).
Hence it is natural to ask which CCC's are fixed setwise by the group $\Diag_m(S_q)\rtimes L$,
or more specifically, which are $X$-neighbour transitive with $X\leq\Diag_m(S_q)\rtimes L$.  This leads to
the following definition.  
\begin{definition} A code $C$ in $H(m,q)$ is \emph{diagonally $X$-neighbour transitive}, or simply
\emph{diagonally neighbour transitive}, if it is $X$-neighbour transitive for some $X\leq\Diag_m(S_q)\rtimes L$.
\end{definition}
\noindent Our first major result characterises diagonally neighbour transitive codes,
and shows that diagonally neighbour transitive CCC's are necessarily frequency
permutation arrays.               

\begin{theorem}\label{main1} Let $C$ be a diagonally neighbour
  transitive code in $H(m,q)$.  Then either $C$ is a frequency permutation
  array; $C=\{(a,\ldots,a)\}$ for some letter $a$; or $C$ is one of the codes
  described in Definition \ref{codedef} (i), (ii) or (iii), none of
  which is a constant composition code.    
\end{theorem}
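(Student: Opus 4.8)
The plan is to exploit the single invariant that drives everything. Since $X\le\Diag_m(S_q)\rtimes L$ and both $\Diag_m(S_q)$ and $L$ preserve the multiset $\mathcal I(\cdot)$ of multiplicities (by Lemma \ref{num} and the remarks preceding the definition), every element of $X$ preserves this \emph{shape}. As $C$ and $C_1$ are $X$-orbits, all codewords share one shape $\mathcal I$, so $C\subseteq\Pi(\mathcal I)$, and all neighbours share one shape, say $\mathcal I_1$. First I would record exactly how the shape transforms under a single-coordinate change of a shape-$\mathcal I$ vertex: changing an entry to an \emph{already-occurring} letter (a \emph{merge}) sends multiplicities $\{p_i,p_j\}\mapsto\{p_i-1,p_j+1\}$ (with the part $p_i$ disappearing when $p_i=1$), while changing it to an \emph{unused} letter (a \emph{split}, possible precisely when $k<q$, where $k=|\mathcal I|$) replaces a part $p_i$ by $\{p_i-1,1\}$.

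The crucial observation is then purely combinatorial: any single-coordinate move whose resulting shape differs from $\mathcal I$ must land on a non-codeword, hence on a neighbour, and so its shape must equal $\mathcal I_1$. Consequently \emph{at most one} shape other than $\mathcal I$ is reachable from a shape-$\mathcal I$ vertex by a single move; moreover, if $\mathcal I_1\neq\mathcal I$ then every shape-\emph{preserving} move must land back in $C$. I would use this to dispose of the constant composition case first. If $C$ is a CCC then all $q$ letters occur, so $k=q$ and only merges are available; and since the composition $Q(\alpha)$ is now rigid (the same letter carries the same multiplicity in every codeword), every merge strictly changes $Q$ and therefore produces a neighbour. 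Applying the ``one non-$\mathcal I$ shape'' constraint to \emph{all} merges forces all multiplicities to be equal: a short argument shows that if two multiplicities $a<b$ occur then merging an $a$-letter into a $b$-letter yields a shape containing the value $b+1$, whereas merging a $b$-letter into an $a$-letter does not, giving two distinct neighbour shapes. Hence $\mathcal I=\{m/q,\ldots,m/q\}$ and $C$ is a frequency permutation array.

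It remains to treat the codes that are not constant composition codes, and this is where the real work---and the main obstacle---lies. Here either $k<q$, so splits are available, or the composition $Q$ genuinely varies across $C$ while the shape stays fixed (as already happens for the full set $\Pi(\{1,2\})$ over a binary alphabet). I would run the ``at most one reachable shape'' constraint against the two families of moves simultaneously: for most shapes a split from a part of size $\ge2$ and a merge produce two \emph{distinct} shapes, both different from $\mathcal I$, which is already a contradiction; this eliminates all but a short list of admissible shapes. The survivors are $k=1$, giving $\mathcal I=\{m\}$ and the constant words $C=\{(a,\ldots,a)\}$, together with the handful of special shapes (with unit or consecutive multiplicities, and $k$ close to $q$) underlying Definition \ref{codedef}. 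For each surviving shape the final step is to pass from the shape to the actual orbit: using transitivity of $X$ on $C$ and on $C_1$ I would pin down $C$ itself and verify that it is one of the codes of Definition \ref{codedef}(i)--(iii), none of which is a constant composition code. I expect the delicate part throughout to be the borderline shapes---those with a multiplicity equal to $1$, where splits are shape-preserving and can hide neighbours, and those where two multiplicities differ by exactly $1$, where merges are shape-preserving---since precisely there the naive counting fails and one must track the orbit structure rather than the shape alone.
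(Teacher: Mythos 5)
Your skeleton is the paper's own: the invariant you call the ``shape'' is exactly $\Num(\cdot)$, your principle that all codewords share one shape and all neighbours share one shape is Corollary \ref{specprop}, and your merge/split bookkeeping is precisely how the proof of Theorem \ref{main2} manufactures two neighbours of $\bar{\alpha}$ with distinct $\Num$-values to eliminate all but a short list of shapes. The constant-composition half of your argument is sound and corresponds to the paper's case $k=q$ with unequal parts. The problems are in the remaining half, and they are not mere bookkeeping.

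First, the case $k=1$ with $|C|\geq 2$ cannot be settled by shapes at all: the code $\{(0^m),(1^m)\}$ in $H(m,3)$ with $m\geq 3$ has every codeword of shape $\{m\}$ and every neighbour of shape $\{m-1,1\}$, so it passes every test you propose, yet it is not diagonally neighbour transitive (any $(h,\ldots,h)\sigma$ preserving $\{(0^m),(1^m)\}$ must fix the letter $2$, so it cannot map $\nu(\alpha,1,1)$ to $\nu(\alpha,1,2)$ for $\alpha=(0^m)$). The paper escapes via $1$-regularity (Remark \ref{regrem}) together with an imported classification of $1$-regular codes of minimum distance $m$ from \cite{famctr}, concluding $C=\Rep(m,q)$; nothing in your outline replaces this step. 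Second, ``pinning down $C$ by transitivity'' does not exclude proper $X$-orbits inside $\Inj(m,q)$ or $W([m/2],2)$. The tool that does is your own remark that shape-preserving moves must land back in $C$, but to convert that into $C=\Pi(\mathcal{I})$ one must prove that $\Inj(m,q)$ and $W([m/2],2)$ are connected as induced subgraphs of the Hamming graph --- the paper's Lemmas \ref{connected} and \ref{delta1con}, the first of which requires a genuine induction for $\Inj(m,q)$. Finally, the borderline shapes you defer (a part equal to $1$; two parts differing by exactly $1$, which for $q=2$ yields $W([m/2],2)$) are exactly where the surviving non-FPA codes live; the case analysis of Table \ref{tableneigh} is the ``short argument'' you promise, and it is the core of the proof rather than a loose end.
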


Theorem \ref{main1} gives us a nice characterisation of diagonally neighbour
transitive codes, but it does not provide us with any examples of neighbour transitive frequency
permutation arrays.  We consider \emph{permutation codes} to find examples of such codes. 
By identifying the alphabet $Q$ with the set $\{1,\ldots,q\}$, any permutation $t\in S_q$ can be
associated with the $q$-tuple $\alpha(t)$ in $H(q,q)$, which has $i$th entry equal to the image of $i$ under
$t$.  For example, if $q=3$ and $t=(1,2,3)$, then $\alpha(t)=(2,3,1)$.  
For a subset $T$ of $S_q$, we define $C(T)=\{\alpha(t)\,:\,t\in T\}$, called the \emph{permutation code
generated by $T$}, and $N_{S_q}(T)=\{x\in S_q\,:\,T^x=T\}$.  

\begin{theorem}\label{permiff} Let $T$ be a subgroup of $S_q$.  Then the permutation code
$C(T)$ is diagonally neighbour transitive in $H(q,q)$ if and only if $N_{S_q}(T)$ is $2$-transitive.
Moreover, for any positive integer $p$ and diagonally neighbour transitive code
$C(T)$, the code $\Rep_p(C(T))$, given in (\ref{genrepc}), is a diagonally neighbour transitive
frequency permutation array in $H(pq,q)$.
\end{theorem}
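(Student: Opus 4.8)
The plan is to translate everything into the two-sided action of $S_q$ on itself and reduce to a transitivity property of $N_{S_q}(T)$. First I would record the action of $G:=\Diag_q(S_q)\rtimes L$ on permutation vertices: $\Diag_q(g)$ sends $\alpha(t)$ to $\alpha(gt)$, while an entry permutation $h\in L\cong S_q$ sends $\alpha(t)$ to $\alpha(th^{-1})$; these commute, so $G\cong S_q\times S_q$ acting on labels by $t\mapsto gth^{-1}$. Since $T$ is a subgroup, $gTh^{-1}=T$ forces $gh^{-1}\in T$ and $h\in N_{S_q}(T)$, giving the setwise stabiliser $Y:=\Aut(C(T))\cap G=\{(\Diag_q(g),h):h\in N_{S_q}(T),\ gh^{-1}\in T\}$. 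The subgroup $\Diag_q(T)\leq Y$ acts by left multiplication, hence regularly, on $C(T)$, so $C(T)$ is automatically a $Y$-orbit; a routine argument then shows $C(T)$ is diagonally neighbour transitive if and only if $Y$ is transitive on the neighbour set $C_1$.

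Next I would analyse the local action at $\alpha(e)$. Changing one entry of a permutation produces a repeated and a missing letter, so every vertex at Hamming distance $1$ from a codeword is a non-codeword; thus $C_1$ is exactly the set of distance-$1$ vertices, and those adjacent to $\alpha(e)$ are the vertices $\beta_{j,c}$ obtained by resetting entry $j$ to some $c\neq j$. A direct check shows $Y_{\alpha(e)}=\{(\Diag_q(h),h):h\in N_{S_q}(T)\}\cong N_{S_q}(T)$ acts by $\beta_{j,c}\mapsto\beta_{h(j),h(c)}$, i.e. as $N_{S_q}(T)$ on ordered pairs of distinct points. Hence $Y_{\alpha(e)}$ is transitive on the distance-$1$ vertices of $\alpha(e)$ exactly when $N_{S_q}(T)$ is $2$-transitive; combined with transitivity of $Y$ on $C(T)$ and the fact that each neighbour is adjacent to a codeword, the standard ``transitive on codewords and on the neighbours of one codeword implies transitive on $C_1$'' argument gives the sufficiency direction.

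The main obstacle is the converse, because a neighbour can be adjacent to two codewords, in which case the distance-$1$ vertices of $\alpha(e)$ are not in bijection with the neighbours and transitivity of $Y$ on $C_1$ need not descend to $Y_{\alpha(e)}$. The plan is to track the number $\lambda(\beta)$ of codewords adjacent to a neighbour $\beta$, which is a $Y$-invariant, so transitivity of $Y$ on $C_1$ forces $\lambda$ constant on the $\beta_{j,c}$. The only codewords possibly adjacent to $\beta_{j,c}$ are $\alpha(e)$ and the transposition $\alpha((j\,c))$, so $\lambda(\beta_{j,c})=2$ precisely when $(j\,c)\in T$; constancy therefore forces either that $T$ contains no transposition or that $T$ contains every transposition. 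In the first case the flags are in $Y$-equivariant bijection with $C_1$, so transitivity on $C_1$ upgrades to transitivity of $Y_{\alpha(e)}\cong N_{S_q}(T)$ on ordered pairs, i.e. $2$-transitivity; in the second case $T=S_q$ and $N_{S_q}(T)=S_q$ is $2$-transitive. This completes the equivalence.

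For the ``moreover'' statement I would take $\Rep_p$ to repeat each entry $p$ times, so that in $\Rep_p(\alpha(t))$ every letter occurs exactly $p$ times and $\Rep_p(C(T))$ is a frequency permutation array in $H(pq,q)$. To realise diagonal neighbour transitivity I would use the group $X\leq\Diag_{pq}(S_q)\rtimes\Sym(pq)$ generated by the diagonal $\Diag_{pq}(S_q)$, the lifts $\widehat h$ ($h\in N_{S_q}(T)$) permuting the $q$ blocks of length $p$ (which reproduce the $Y$-action and are transitive on $\Rep_p(C(T))$), and the within-block group $S_p^q$ (which fixes every codeword but permutes slots inside each block). At the codeword $\Rep_p(\alpha(e))$ the stabiliser then contains a copy of $N_{S_q}(T)$ acting on (block, letter) pairs as in the second paragraph together with $S_p^q$ acting transitively on the slots within each block; since $N_{S_q}(T)$ is $2$-transitive by the first part, these are jointly transitive on all distance-$1$ vertices of $\Rep_p(\alpha(e))$, and the same forward argument shows $X$ is transitive on the neighbour set, so $\Rep_p(C(T))$ is a diagonally neighbour transitive frequency permutation array.
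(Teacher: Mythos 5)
Your proof is correct, and while your sufficiency direction and your treatment of the repeated code essentially reproduce the paper's argument (your stabiliser $\{(\Diag_q(h),h):h\in N_{S_q}(T)\}$ is the paper's $A(T)$, and your block-permuting plus within-block group is the paper's $X\times S_p$ from Lemma \ref{replemma}), your necessity direction takes a genuinely different and more self-contained route. The paper splits on the minimum distance: for $\delta\geq 3$ it manipulates an explicit element $x_y\sigma(z)$ carrying one neighbour of $\alpha(1)$ to another and extracts $y=z\in N_{S_q}(T)$, while for $\delta=2$ it invokes Lemma \ref{1reg}, whose proof rests on a design-theoretic theorem of Goethals and van Tilborg on uniformly packed codes, to force $T=S_q$. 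You instead observe that the number $\lambda(\beta)$ of codewords adjacent to a neighbour $\beta$ is invariant under any group preserving the distance partition, compute that $\lambda(\beta_{j,c})\in\{1,2\}$ with value $2$ exactly when the transposition $(j\,c)$ lies in $T$, and conclude that either $T$ contains every transposition (so $T=S_q$ and $N_{S_q}(T)=S_q$) or every neighbour has a unique adjacent codeword, in which case the flag bijection descends transitivity on $C_1$ to transitivity of $Y_{\alpha(e)}\cong N_{S_q}(T)$ on ordered pairs. This buys you an elementary, citation-free proof of necessity; the paper's route buys the stronger standalone Lemma \ref{1reg}, which holds for arbitrary subsets $T$, whereas your counting argument uses that $T$ is a subgroup. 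Two small points to align with the paper: the action conventions are transposed (in the paper $x_g$ sends $\alpha(t)$ to $\alpha(tg)$ and $\sigma(h)$ sends $\alpha(t)$ to $\alpha(h^{-1}t)$), and the code in (\ref{genrepc}) concatenates $p$ copies of each codeword rather than repeating each entry $p$ times; your version is equivalent under a coordinate permutation that normalises $\Diag_{pq}(S_q)\rtimes S_{pq}$, so nothing breaks, but you should state that identification explicitly since the theorem refers to the code of (\ref{genrepc}).
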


In Section \ref{secdef} we introduce the required definitions and some
preliminary results.  Then, in Section \ref{ntrex}, we give some examples
of diagonally neighbour transitive codes in $H(m,q)$.  Finally, we prove
Theorems \ref{main1} and \ref{permiff} in Sections \ref{codewgrps} and
\ref{sec1codect} respectively.      

\section{Definitions and Preliminaries}\label{secdef}

Any code of length $m$ over an alphabet $Q$ of size $q$ can be
embedded in the vertex set of the \emph{Hamming graph}.  The Hamming graph $\Gamma=H(m,q)$ 
has vertex set $V(\Gamma)$, the set of $m$-tuples with entries from $Q$, and an edge exists between two
vertices if and only if they differ in precisely one entry.
Throughout we assume that $m,q\geq 2$.  The automorphism group of $\Gamma$, which we
denote by $\Aut(\Gamma)$, is the semi-direct product
$B\rtimes L$ where $B\cong S_q^m$ and $L\cong S_m$, see \cite[Theorem 9.2.1]{distreg}.    Let
$g=(g_1,\ldots, g_m)\in B$, $\sigma\in L$ and
$\alpha=(\alpha_1,\ldots,\alpha_m)\in V(\Gamma)$. Then $g$ and $\sigma$
act on $\alpha$ in the following way: \[\begin{array}{ccc}
\alpha^g=(\alpha_1^{g_1},\ldots,\alpha_m^{g_m}),&\,\,\,&\alpha^\sigma=(\alpha_{1\sigma^{-1}},\ldots,\alpha_{m\sigma^{-1}}). \end{array}\]
For any subgroup $T$ of $S_q$, we define the following subgroup of
$B$: \begin{equation}\label{diagdef}\Diag_m(T)=\{(h,\ldots,h)\in B\,:\,h\in T\}.\end{equation}  Let 
$M=\{1,\ldots,m\}$, and view $M$ as the set of vertex entries of
$H(m,q)$.  Let $0$ denote a distinguished element of the alphabet $Q$.
For $\alpha\in V(\Gamma)$, the \emph{support of $\alpha$} is the set
$\supp(\alpha)=\{i\in M\,:\,\alpha_i\neq 0\}$.  The \emph{weight of
  $\alpha$} is defined as $\wt(\alpha)=|\supp(\alpha)|$.  For all pairs of
vertices $\alpha,\beta\in V(\Gamma)$, the \emph{Hamming distance} between
$\alpha$ and $\beta$, denoted by $d(\alpha,\beta)$, is defined to be
the number of entries in which the two vertices differ.  We let
$\Gamma_k(\alpha)$ denote the set of vertices in $H(m,q)$ that are at
distance $k$ from $\alpha$.  For $a_1,\ldots,a_k\in Q$ and positive integers
$p_1,\ldots,p_k$ such that $\sum p_i=m$, we let
$(a_1^{p_1},a_2^{p_2},\ldots,a_k^{p_k})$ denote the
vertex
$$(\underbrace{a_1,\ldots,a_1}_{p_1},\underbrace{a_2,\ldots,a_2}_{p_2},\ldots,
\underbrace{a_k,\ldots,a_k}_{p_k})\in 
V(\Gamma)$$ Let $\alpha=(\alpha_1,\ldots,\alpha_m)\in V(\Gamma)$.  For $a\in Q$
we let $\nu(\alpha,i,a)\in V(\Gamma)$ denote the vertex with $j$th entry 
\[\nu(\alpha,i,a)|_j=\left\{\begin{array}{ll}      
 \alpha_j&\textnormal{if $j\neq i$}\\ a &\textnormal{if
   $j=i$.} \end{array}\right.\]  
We note that if $\alpha_i=a$ then $\nu(\alpha,i,a)=\alpha$, otherwise
$\nu(\alpha,i,a)\in\Gamma_1(\alpha)$.  Throughout this paper whenever
we refer to $\nu(\alpha,i,a)$ as a \emph{neighbour of $\alpha$}, or
being adjacent to $\alpha$, we mean that $a\in
Q\backslash\{\alpha_i\}$.  The following straight forward result describes the
action of automorphisms of $\Gamma$ on vertices of this form.

\begin{lemma}\label{neigact} Let $\alpha=(\alpha_1,\ldots,\alpha_m)\in
  V(\Gamma)$, $a\in Q$, and $x=(h_1,\ldots,h_m)\sigma\in\Aut(\Gamma)$.
  Then $\nu(\alpha,i,a)^x=\nu(\alpha^x,i^\sigma,a^{h_i})$, and is adjacent to 
  $\alpha^x$ if and only if $\nu(\alpha,i,a)$ is adjacent to $\alpha$.  
\end{lemma}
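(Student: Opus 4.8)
The plan is to verify both assertions by a direct coordinate computation from the two displayed formulas for the actions of the base group $B$ and of $L\cong S_m$. First I would record the effect of a general element $x=(h_1,\ldots,h_m)\sigma$ on an arbitrary vertex $\beta$. Writing $x$ as the product of $g=(h_1,\ldots,h_m)\in B$ followed by $\sigma\in L$, and composing $\beta^g=(\beta_1^{h_1},\ldots,\beta_m^{h_m})$ with the permutation-of-entries action, one obtains the single formula $(\beta^x)_j=\beta_{j\sigma^{-1}}^{h_{j\sigma^{-1}}}$ valid for every $j\in M$. This reduces everything to bookkeeping with the index $j\sigma^{-1}$.

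Next I would apply this formula with $\beta=\nu(\alpha,i,a)$ and compare coordinate by coordinate against the claimed vertex $\nu(\alpha^x,i^\sigma,a^{h_i})$. The natural split is according to whether $j=i^\sigma$ or $j\neq i^\sigma$, equivalently whether $j\sigma^{-1}=i$ or $j\sigma^{-1}\neq i$. When $j\sigma^{-1}=i$ the preimage index picks out the single altered entry, whose value is $a$, so the $j$th coordinate of the image is $a^{h_i}$, matching the altered entry of $\nu(\alpha^x,i^\sigma,a^{h_i})$ in position $i^\sigma$. When $j\sigma^{-1}\neq i$ the vertex $\nu(\alpha,i,a)$ agrees with $\alpha$ at the relevant index, so the $j$th coordinate of the image equals $(\alpha^x)_j$, again matching. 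Since the two vertices agree in every coordinate, the identity $\nu(\alpha,i,a)^x=\nu(\alpha^x,i^\sigma,a^{h_i})$ follows.

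For the adjacency statement I would use the observation preceding the lemma: $\nu(\alpha,i,a)$ is adjacent to $\alpha$ precisely when $a\neq\alpha_i$, and likewise $\nu(\alpha^x,i^\sigma,a^{h_i})$ is adjacent to $\alpha^x$ precisely when $a^{h_i}\neq(\alpha^x)_{i^\sigma}$. Applying the coordinate formula once more gives $(\alpha^x)_{i^\sigma}=\alpha_i^{h_i}$, so the latter condition reads $a^{h_i}\neq\alpha_i^{h_i}$, which is equivalent to $a\neq\alpha_i$ because $h_i\in S_q$ is a bijection. This yields the claimed equivalence. Alternatively, one may simply invoke that the graph automorphism $x$ preserves Hamming distance, so $d(\nu(\alpha,i,a),\alpha)=1$ if and only if $d(\nu(\alpha,i,a)^x,\alpha^x)=1$.

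There is no substantial obstacle here; the only point requiring care is the index bookkeeping—keeping $\sigma$ and $\sigma^{-1}$ straight and confirming that the distinguished entry in position $i$ is carried to position $i^\sigma$—which is exactly what the case split on whether $j\sigma^{-1}=i$ is designed to handle.
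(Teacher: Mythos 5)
Your proof is correct: the coordinate formula $(\beta^x)_j=\beta_{j\sigma^{-1}}^{h_{j\sigma^{-1}}}$, the case split on $j\sigma^{-1}=i$, and the bijectivity of $h_i$ for the adjacency claim are all handled properly. The paper states this lemma without proof as a ``straight forward result,'' and your direct verification is exactly the computation it implicitly intends.
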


For a code $C$ in $H(m,q)$, the \emph{minimum distance, $\delta$,
  of C} is the smallest distance between distinct codewords of $C$.
For any $\gamma\in V(\Gamma)$, we define 
$$d(\gamma,C)=\min\{d(\gamma,\beta)\,:\,\beta\in C\}.$$ to be the 
\emph{distance of $\gamma$ from $C$}.  The \emph{covering radius of $C$}, 
which we denote by $\rho$, is the maximum distance that any vertex in $H(m,q)$ is from $C$.  
We let $C_i$ denote the set of vertices that are distance $i$ from $C$, and deduce,
for $i\leq \lfloor (\delta-1)/2\rfloor$, that $C_i$ is the disjoint 
union of $\Gamma_i(\alpha)$ as $\alpha$ varies over $C$.  
Furthermore, $C_0=C$ and $\{C,C_1,\ldots,C_\rho\}$ forms a
partition of $V(\Gamma)$ called the \emph{distance partition of $C$}. 
In particular, the \emph{complete code} $C=V(\Gamma)$ has covering
radius $0$ and trivial distance partition $\{C\}$; and if $C$ is not
the complete code, we call the non-empty subset $C_1$ the \emph{set of
  neighbours of $C$}.  Let $C$ and $C'$ be codes in $H(m,q)$.  We say
$C$ and $C'$ are \emph{equivalent} if there exists $x\in\Aut(\Gamma)$
such that $C^x=C'$, and if $C'=C$ we call $x$ an automorphism of $C$.  
Recall, the automorphism group of $C$, denoted by $\Aut(C)$, 
is the setwise stabiliser of $C$ in $\Aut(\Gamma)$.  

Let $C$ be a code in $H(m,q)$ with distance partition $\{C,C_1,\ldots,C_\rho\}$.  
As we defined in the introduction, we say $C$ is $X$-neighbour transitive if there exists $X\leq\Aut(\Gamma)$ such that 
$C_i$ is an $X$-orbit for $i=0,1$.  If there exists $X\leq\Aut(\Gamma)$ such
that $C_i$ is an $X$-orbit for $i=0,\ldots,\rho$, we say $C$ is \emph{$X$-completely
transitive}, or simply \emph{completely transitive.}

\begin{remark}  The reader should note that the definition of
  neighbour transitivity given in \cite{ngpaper} is more general than
  the one given here in that it only requires $C_1$ to be an $X$-orbit.
  However, it is not unreasonable to use the definition given here because if
  $\delta\geq 3$ and $C_1$ is an $X$-orbit with $X\leq\Aut(C)$, then $X$
  necessarily acts transitively on $C$, and furthermore, it is shown in
  \cite{ngpaper} that an automorphism group that fixes $C_1$ setwise often has to also fix $C$
  setwise. Note also that completely transitive codes are necessarily
  neighbour transitive.
\end{remark}

\begin{lemma}\label{distpart} Let $C$ be a code with distance partition
  $\mathcal{C}=\{C,C_1,\ldots,C_\rho\}$ and $y\in\Aut(\Gamma)$.  Then
  $C_i^y:=(C_i)^y=(C^y)_i$ for each $i$.  In particular, the code
  $C^y$ has distance partition $\{C^y,C_1^y,\ldots,C^y_\rho\}$, and
  $\mathcal{C}$ is $\Aut(C)$-invariant.  Moreover, $C$ is
  $X$-neighbour (completely) transitive if and only if $C^y$ is
  $X^y$-neighbour (completely) transitive.      
\end{lemma}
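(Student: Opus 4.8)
The plan is to build everything on the single observation that elements of $\Aut(\Gamma)$ preserve Hamming distance, that is, $d(\alpha^y,\beta^y)=d(\alpha,\beta)$ for all $\alpha,\beta\in V(\Gamma)$ and all $y\in\Aut(\Gamma)$; this is immediate since $y$ is a graph automorphism and $d$ is the graph metric. From this I would first deduce that $y$ preserves distance to the code: because $y$ induces a bijection $C\to C^y$, $\beta\mapsto\beta^y$, for any $\gamma\in V(\Gamma)$ we have
\[
d(\gamma^y,C^y)=\min_{\beta\in C}d(\gamma^y,\beta^y)=\min_{\beta\in C}d(\gamma,\beta)=d(\gamma,C).
\]

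Next I would establish the identity $C_i^y=(C^y)_i$. By definition $\gamma\in C_i$ exactly when $d(\gamma,C)=i$, and by the displayed equation this holds exactly when $d(\gamma^y,C^y)=i$, i.e. when $\gamma^y\in(C^y)_i$. Since $y$ is a bijection on $V(\Gamma)$, this yields $C_i^y=(C^y)_i$ for every $i$. The ``in particular'' clause then follows mechanically: a bijection carries the partition $\{C,C_1,\dots,C_\rho\}$ of $V(\Gamma)$ to the partition $\{C^y,C_1^y,\dots,C_\rho^y\}$, and because $\rho$ is the maximum value taken by $d(\cdot,C)$ it is preserved by $y$, so this latter partition is precisely the distance partition of $C^y$. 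Specialising to $y\in\Aut(C)$, where $C^y=C$, gives $C_i^y=(C^y)_i=C_i$ for each $i$, which is exactly the assertion that $\mathcal{C}$ is $\Aut(C)$-invariant.

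For the final statement I would invoke the standard correspondence between orbits under conjugation, using the right-action convention in which $X^y=y^{-1}Xy$: a subset $\Delta\subseteq V(\Gamma)$ is an $X$-orbit if and only if $\Delta^y$ is an $X^y$-orbit. The verification is a one-line computation, $(\Delta^y)^{y^{-1}xy}=(\Delta^x)^y$, from which $X^y$ fixes $\Delta^y$ setwise precisely when $X$ fixes $\Delta$ setwise, and acts transitively on $\Delta^y$ precisely when $X$ acts transitively on $\Delta$. Applying this with $\Delta=C_i$ and combining it with the identity $C_i^y=(C^y)_i$ shows that $C_i$ is an $X$-orbit if and only if $(C^y)_i$ is an $X^y$-orbit. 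Since neighbour transitivity asks for this at $i=0,1$, and complete transitivity at all $i\leq\rho$ with $\rho$ preserved by $y$, both equivalences follow at once. I do not expect a genuine obstacle here; the only point demanding care is keeping the action conventions consistent, in particular ensuring that $X^y$ is read as $y^{-1}Xy$ so that the orbit correspondence in the last step transforms contravariantly in the same direction as $C^y$ and lines up with the distance-partition identity rather than with its inverse.
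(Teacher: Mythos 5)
Your proposal is correct and follows essentially the same route as the paper: both arguments rest on the fact that automorphisms preserve Hamming distance, deduce $d(\gamma^y,C^y)=d(\gamma,C)$ (the paper via two inequalities obtained by applying $y$ and $y^{-1}$, you via a direct computation of the minimum over the bijection $C\to C^y$), conclude $C_i^y=(C^y)_i$, and finish with the standard orbit--conjugation correspondence. Your write-up merely makes explicit the one-line verification $(\Delta^y)^{y^{-1}xy}=(\Delta^x)^y$ that the paper leaves implicit.
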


\begin{proof}  Let $\beta\in C_i$.  Then there exists $\alpha\in C$
  such that $d(\beta,\alpha)=i$.  Since automorphisms preserve
  adjacency it follows that $d(\beta^y,\alpha^y)=i$.  Thus
  $d(\beta^y,C^y)\leq i$.  The same argument shows that if
  $j=d(\beta^y,C^y)$ then
  $i=d(\beta,C)=d((\beta^y)^{y^{-1}},(C^{y})^{y^{-1}})\leq j$, and
  hence $d(\beta^y,C^y)=i$.  Thus $(C_i)^y\subseteq (C^y)_i$.  A
  similar argument shows that $(C^y)_i\subseteq (C_i)^y$.  Hence
  $(C_i)^y=(C^y)_i$. Therefore, without ambiguity, we can denote this
  set by $C_i^y$.  Thus the distance partition of $C^y$ is
  $\{C^y,C_1^y\ldots,C_\rho^y\}$.  In particular, if $y\in\Aut(C)$, it
  follows that $(C_i)^y=(C^y)_i=C_i$ for each $i$.  That is
  $\mathcal{C}$ is $\Aut(C)$-invariant.  Finally, $C$ is
  $X$-neighbour (completely) transitive if and only if $C_i$ is an
  $X$-orbit for $i=0,1$ ($i=0,\ldots,\rho$), which holds if and only
  if $C_i^y$ is an $X^y$-orbit for $i=0,1$ ($i=0,\ldots,\rho$).
\end{proof}

Let $C$ be a code with covering
  radius $\rho$ and let $s\in\{0,\ldots,\rho\}$. 
  As in \cite[p. 346]{distreg}, we say $C$ is \emph{$s$-regular} if
  for each vertex $\gamma\in C_i$, with $i=0,\ldots,s$, and integer
  $k=0,\ldots,m$, the number of codewords at distance $k$ from
  $\gamma$ depends only on $i$ and $k$, and is independent of the
  choice of $\gamma\in C_i$.  If $s=\rho$ we say $C$ is
  \emph{completely regular}.      

\begin{remark}\label{regrem} It is known that completely transitive
  codes are necessarily completely regular \cite[Lemma 2.1]{giupra}.
  Similarly, because automorphisms preserve adjacency, it is straight
  forward to show that any neighbour transitive code is necessarily $1$-regular.
     
\end{remark} 

\begin{lemma}\label{partfact}  Let $C$ be a completely regular code in
  $H(m,q)$ with distance partition
  $\{C,C_1,\ldots,C_\rho\}$.  Then $C_\rho$ is completely
  regular with distance partition
  $\{C_\rho,C_{\rho-1},\ldots,C_1,C\}$; and $\Aut(C)=\Aut(C_\rho)$.
  Furthermore, $C$ is $X$-completely transitive if and only if
  $C_\rho$ is $X$-completely transitive.       
\end{lemma}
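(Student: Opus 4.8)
The plan is to first determine the distance partition of $C_\rho$, showing it is the reverse of that of $C$, and then to read off the three assertions in turn. Since $C$ is completely regular, its distance partition $\{C_0,\ldots,C_\rho\}$ is equitable, by the standard equivalence between these notions (see \cite{distreg}); for $0\leq i\leq\rho$ write $c_i,a_i,b_i$ for the number of neighbours that a vertex of $C_i$ has in $C_{i-1},C_i,C_{i+1}$ respectively. The central claim is that $(C_\rho)_i=C_{\rho-i}$ for each $i$. For the easy inequality, fix $\gamma\in C_i$ and a nearest codeword $c^\ast\in C$, so $d(\gamma,c^\ast)=i$; then for any $\delta\in C_\rho$ the triangle inequality gives $\rho=d(\delta,C)\leq d(\delta,c^\ast)\leq d(\delta,\gamma)+i$, whence $d(\gamma,C_\rho)\geq\rho-i$. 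For the reverse inequality I would construct an outward path $\gamma=\gamma_0,\gamma_1,\ldots,\gamma_{\rho-i}$ with $\gamma_j\in C_{i+j}$, which reaches $C_\rho$ in $\rho-i$ steps and so gives $d(\gamma,C_\rho)\leq\rho-i$.

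The main obstacle is justifying that such an outward step always exists, i.e.\ that $b_j>0$ for $0\leq j<\rho$. This I would establish by a double count: every vertex of $C_{j+1}$ has a neighbour in $C_j$ (take one step along a geodesic to a nearest codeword), so $c_{j+1}>0$; counting the edges between $C_j$ and $C_{j+1}$ from both sides gives $|C_j|\,b_j=|C_{j+1}|\,c_{j+1}>0$, and hence $b_j>0$. Combining the two inequalities, $d(\gamma,C_\rho)=\rho-i$ for every $\gamma\in C_i$; since $\{C_0,\ldots,C_\rho\}$ and $\{(C_\rho)_0,\ldots,(C_\rho)_\rho\}$ both partition $V(\Gamma)$ and $i\mapsto\rho-i$ is a bijection, this forces $(C_\rho)_i=C_{\rho-i}$, and in particular $C_\rho$ has covering radius $\rho$.

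Complete regularity of $C_\rho$ is then immediate: its distance partition is exactly the partition $\{C_0,\ldots,C_\rho\}$ with the cells listed in reverse order, and equitability is a property of the unordered partition, so this partition is again equitable and $C_\rho$ is completely regular (by \cite{distreg}), with intersection numbers $c_i'=b_{\rho-i}$, $a_i'=a_{\rho-i}$, $b_i'=c_{\rho-i}$.

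For the automorphism groups I would apply Lemma \ref{distpart} twice. If $y\in\Aut(C)$ then the distance partition of $C$ is fixed cell by cell, so $C_\rho^y=C_\rho$ and $y\in\Aut(C_\rho)$; conversely, applying Lemma \ref{distpart} to the code $C_\rho$, any $y\in\Aut(C_\rho)$ fixes each cell of its distance partition, in particular the last cell $(C_\rho)_\rho=C_0=C$, so $y\in\Aut(C)$. Thus $\Aut(C)=\Aut(C_\rho)$. Finally, $C$ is $X$-completely transitive precisely when each $C_i$ is an $X$-orbit; since $(C_\rho)_i=C_{\rho-i}$, the cells of the distance partition of $C_\rho$ are the same sets as those of $C$, so each $(C_\rho)_i$ is an $X$-orbit if and only if each $C_j$ is, which gives the stated equivalence of complete transitivity.
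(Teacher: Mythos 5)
Your proof is correct, and for the first (and only substantive) assertion it takes a genuinely different route from the paper. The paper simply cites Neumaier \cite{neum} for the fact that $C_\rho$ is completely regular with the reversed distance partition, and then, exactly as you do, applies Lemma \ref{distpart} in both directions to get $\Aut(C)=\Aut(C_\rho)$ and observes that the two distance partitions consist of the same cells to transfer complete transitivity. You instead prove the reversal $(C_\rho)_i=C_{\rho-i}$ directly: the triangle-inequality lower bound is straightforward, and you correctly identify that the real issue is the existence of an outward step, i.e.\ $b_j>0$ for $j<\rho$, which you settle by combining equitability of the distance partition with the edge count $|C_j|\,b_j=|C_{j+1}|\,c_{j+1}$ and the observation that $c_{j+1}>0$ (first step of a geodesic to a nearest codeword). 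This is exactly where a general (non-completely-regular) code could fail, so the use of equitability is essential and correctly placed. What your argument buys is a self-contained, explicit proof with the intersection numbers of $C_\rho$ read off as $c_i'=b_{\rho-i}$, $a_i'=a_{\rho-i}$, $b_i'=c_{\rho-i}$; what it costs is reliance on the equivalence between complete regularity and equitability of the distance partition, which is itself the main theorem of \cite{neum} (also discussed in \cite{distreg}), so in the end you have traded one citation for another of comparable depth. Both routes are sound.
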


\begin{proof}  The fact that $C_\rho$ is completely regular with
  distance partition $\{C_\rho,C_{\rho-1},\ldots,C\}$ is given in
  \cite{neum}.  It then follows from Lemma \ref{distpart} that 
  $\Aut(C)=\Aut(C_\rho)$.  By definition, $C$ is $X$-completely
  transitive if and only if each $C_i$ is an $X$-orbit, which therefore holds
  if and only if $C_\rho$ is $X$-completely transitive.     
\end{proof}

For $\alpha\in V(\Gamma)$, recall $Q(\alpha)$, the
composition of $\alpha$ defined in (\ref{compo}).  For each distinct
$p_i$ that appears in $Q(\alpha)$ we want to register the number of
distinct letters that appear $p_i$ times.  We
let $$\Num(\alpha)=\{(p_1,s_1),\ldots,(p_j,s_j)\}$$ where 
$(p_i,s_i)$ means that $s_i$ distinct letters appear $p_i$ times in
$\alpha$.  We note that $\sum s_i=k$, the number of distinct letters
that occur in $\alpha$.      

\begin{lemma}\label{num} Let $\alpha\in V(\Gamma)$ with
  $Q(\alpha)=\{(a_1,p_1),\ldots,(a_k,p_k)\}$ and $x=(h,\ldots,h)\sigma\in
\Diag_m(S_q)\rtimes L$.  Then 
  $Q(\alpha^x)=\{(a_1^h,p_1),\ldots,(a_k^h,p_k)\}$ and
$\Num(\alpha^x)=\Num(\alpha)$.
\end{lemma}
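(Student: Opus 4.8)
The plan is to unpack the definitions and track how the automorphism $x=(h,\ldots,h)\sigma$ transforms the letters and their multiplicities in $\alpha$. Recall from the action defined in the excerpt that $\alpha^x=(\alpha_1,\ldots,\alpha_m)^{(h,\ldots,h)\sigma}$, so first the diagonal element $(h,\ldots,h)$ sends each entry $\alpha_j$ to $\alpha_j^h$, and then $\sigma\in L$ permutes the positions. The key observation is that $\sigma$ merely rearranges entries among the $m$ coordinates, so it changes neither which letters appear nor how many times each appears; only the diagonal part $(h,\ldots,h)$ can alter the letters, and it does so uniformly by applying the single permutation $h\in S_q$ to every entry.

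First I would fix $i\in\{1,\ldots,k\}$ and count the occurrences of the letter $a_i^h$ in $\alpha^x$. Since $h$ is a bijection on $Q$, a coordinate of $\alpha^x$ equals $a_i^h$ if and only if the corresponding coordinate of $\alpha$ (before applying $\sigma$, which only moves coordinates around) equals $a_i$. Because $a_i$ occurs exactly $p_i$ times in $\alpha$, the letter $a_i^h$ occurs exactly $p_i$ times in $\alpha^x$. As $h$ is injective, the letters $a_1^h,\ldots,a_k^h$ are pairwise distinct, so these are precisely the $k$ distinct letters occurring in $\alpha^x$, each with its recorded multiplicity. This yields
\[
Q(\alpha^x)=\{(a_1^h,p_1),\ldots,(a_k^h,p_k)\},
\]
which is the first assertion.

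For the second assertion, I would simply read off $\Num(\alpha^x)$ from the computation of $Q(\alpha^x)$. By definition $\Num$ records, for each distinct multiplicity value $p$, the number $s$ of distinct letters attaining that multiplicity. Since passing from $\alpha$ to $\alpha^x$ relabels the letters via the bijection $h$ but leaves each multiplicity $p_i$ attached to exactly one new letter $a_i^h$, the multiset of pairs $(p_i,s_i)$ is unchanged: the same multiplicities occur, and each is attained by the same number of (now relabelled) letters. Hence $\Num(\alpha^x)=\Num(\alpha)$.

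I do not expect any genuine obstacle here; the lemma is essentially a bookkeeping statement, and the only point requiring care is to be explicit that $h$ being a bijection guarantees both that multiplicities are preserved letter-by-letter and that distinct letters remain distinct after relabelling. The mildest subtlety is disentangling the roles of the two factors of $x$: I would state plainly at the outset that $\sigma$ acts only by permuting coordinate positions and therefore preserves both $Q(\alpha)$ and $\Num(\alpha)$, reducing the whole lemma to the effect of the diagonal factor $(h,\ldots,h)$.
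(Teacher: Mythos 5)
Your proof is correct and follows essentially the same route as the paper's: both track that the diagonal factor $(h,\ldots,h)$ relabels each letter $a_i$ to $a_i^h$ while $\sigma$ only permutes positions, so each occurrence of $a_i$ in $\alpha$ corresponds to an occurrence of $a_i^h$ in $\alpha^x$, and the multiplicities $p_1,\ldots,p_k$ are preserved, giving both assertions. Your explicit remark that injectivity of $h$ keeps the relabelled letters distinct is a small point the paper leaves implicit, but the argument is the same.
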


\begin{proof}  Let $\alpha=(\alpha_1,\ldots,\alpha_m)$ and $a\in Q$.  Note that
  $\alpha_i=a$ if and only if $\alpha_i^h=a^h$, and that
  $\alpha_i^h=\alpha^x|_{i^\sigma}$.  Therefore for every occurrence
  of $a$ in $\alpha$ there is a corresponding occurrence of $a^h$ in
  $\alpha^x$.  Thus $Q(\alpha^x)=\{(a_1^h,p_1),\ldots,(a_k^h,p_k)\}$.
  We note that $\{p_1,\ldots,p_k\}$ is left invariant by the action of
  $x$ on $\alpha$.  Therefore $\Num(\alpha)=\Num(\alpha^x)$.
\end{proof}

\begin{corollary}\label{specprop} Let $C$ be a diagonally $X$-neighbour
transitive
  code, and let $\nu\in C_i$ for $i=0,1$.  Then $\Num(\nu')=\Num(\nu)$ for all
  $\nu'\in C_i$.  If in addition $X\leq L$, then $Q(\nu')=Q(\nu)$
  for all $\nu'\in C_i$.       
\end{corollary}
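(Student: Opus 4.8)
The plan is to deduce both assertions directly from the transitivity hypothesis together with Lemma \ref{num}, so the argument is essentially a bookkeeping of the form of elements of $X$. First I would fix $i\in\{0,1\}$ and recall that, because $C$ is diagonally $X$-neighbour transitive, the set $C_i$ is a single $X$-orbit with $X\leq\Diag_m(S_q)\rtimes L$. Consequently, for any two vertices $\nu,\nu'\in C_i$ there is an element $x\in X$ with $\nu^x=\nu'$.

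Next I would write $x=(h,\ldots,h)\sigma$ for some $h\in S_q$ and $\sigma\in L$, which is legitimate precisely because $x$ lies in $\Diag_m(S_q)\rtimes L$. Applying Lemma \ref{num} to $\nu$ and this $x$ then gives $\Num(\nu')=\Num(\nu^x)=\Num(\nu)$, which proves the first claim. For the second assertion, suppose in addition that $X\leq L$. Then the element $x$ above may be taken in $L$, so its base-group component is trivial, i.e. $h=\id$. Writing $Q(\nu)=\{(a_1,p_1),\ldots,(a_k,p_k)\}$ and invoking Lemma \ref{num} once more yields $Q(\nu^x)=\{(a_1^{\id},p_1),\ldots,(a_k^{\id},p_k)\}=Q(\nu)$, so $Q(\nu')=Q(\nu)$.

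Since the whole corollary is a clean consequence of the $X$-orbit property and the single computation recorded in Lemma \ref{num}, there is no genuine obstacle here. The only point demanding care is confirming that the transporting element $x$ really has the diagonal-times-permutation shape (respectively lies in $L$ with trivial alphabet action), and this is exactly what the containment $X\leq\Diag_m(S_q)\rtimes L$ (respectively $X\leq L$) built into the hypotheses guarantees.
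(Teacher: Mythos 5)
Your argument is correct and is exactly the intended deduction: the paper states this as an immediate corollary of Lemma \ref{num} (giving no separate proof), and your bookkeeping — transporting $\nu$ to $\nu'$ by some $x=(h,\ldots,h)\sigma\in X$ and applying the lemma, with $h=\id$ in the case $X\leq L$ — is precisely what is meant. Nothing is missing.
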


For a positive integer $p$, we can identify the vertex set of the 
Hamming graph $\Gamma^{(p)}=H(mp,q)$ with the set of arbitrary $p$-tuples of
vertices from $\Gamma=H(m,q)$.  For a group $X\leq\Aut(\Gamma)$, we let $(x,\sigma)\in X\times S_{p}$ act on the
vertices of $\Gamma^{(p)}$ in the following way:
$$(\alpha_1,\ldots,\alpha_p)^{(x,\sigma)}=(\alpha^x_{1\sigma^{-1}},\ldots,
\alpha^x_{p\sigma^{-1}}),$$ where $\alpha_1,\ldots,\alpha_p\in V(\Gamma)$.
For $\alpha\in V(\Gamma)$, we let $\rep_{p}(\alpha)=(\alpha,\ldots,\alpha)\in
V(\Gamma^{(p)})$, and for a code $C$ in $\Gamma$ with minimum distance $\delta$ we let 
\begin{equation}\label{genrepc}\Rep_{p}(C)=\{\rep_p(\alpha)\,:\,\alpha\in
C\},\end{equation} 
which is a code in $\Gamma^{(p)}$ with minimum distance $p\delta$.  
It follows that $\rep_p(\alpha)^{(x,\sigma)}=\rep_p(\alpha^x)$, and so $C$ is 
an $X$-orbit if and only if $\Rep_p(C)$ is an $(X\times S_p)$-orbit.  
For $\alpha,\nu\in V(\Gamma)$ we let $\mu(\rep_p(\alpha),i,\nu)$ denote the
vertex constructed by 
changing the $i$th vertex entry of $\rep_p(\alpha)$ from $\alpha$ to $\nu$.  It
follows that $\nu\in\Gamma_1(\alpha)$ if 
and only if $\mu(\rep_{p}(\alpha),i,\nu)\in\Gamma_1(\rep_p(\alpha))$, and that
$\mu(\rep_p(\alpha),i,\nu)^{(x,\sigma)}=\mu(\rep_p(\alpha^x),i^\sigma,\nu^x)$.

\begin{lemma}\label{replemma} Let $C$ be an $X$-neighbour transitive code in $\Gamma=H(m,q)$ with
$\delta\geq 2$ such that a stabiliser $X_\alpha$ acts transitively on $\Gamma_1(\alpha)$ for some $\alpha\in C$.  Then
$\Rep_p(C)$ is $(X\times S_p)$-neighbour transitive in $H(mp,q)$.  
\end{lemma}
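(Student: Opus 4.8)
The plan is to verify the two orbit conditions separately, treating the codewords of $\Rep_p(C)$ first and then its neighbours. For the codewords, I would invoke the identity $\rep_p(\alpha)^{(x,\sigma)}=\rep_p(\alpha^x)$ recorded just before the statement: since $C$ is a single $X$-orbit, this immediately shows that $\Rep_p(C)$ is a single $(X\times S_p)$-orbit, so the condition at $i=0$ is automatic.

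Next I would pin down the neighbour set $(\Rep_p(C))_1$ explicitly. Using $\delta\geq 2$, the code $\Rep_p(C)$ has minimum distance $p\delta\geq 2$, so no vertex $\mu(\rep_p(\alpha),i,\nu)$ with $\nu\in\Gamma_1(\alpha)$ can itself be a codeword, since a codeword $\rep_p(\alpha')$ at distance $1$ would force $p\,d(\alpha,\alpha')=1$, which is impossible. Conversely, any neighbour lies at Hamming distance $1$ from some $\rep_p(\alpha)$, and as the $mp$ coordinates split into $p$ blocks of length $m$ on which $\rep_p(\alpha)$ is constant, such a neighbour differs from $\rep_p(\alpha)$ in a single block $i$, in a vertex $\nu\in\Gamma_1(\alpha)$. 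Hence
$$(\Rep_p(C))_1=\{\mu(\rep_p(\alpha),i,\nu)\,:\,\alpha\in C,\ 1\leq i\leq p,\ \nu\in\Gamma_1(\alpha)\}.$$

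Then I would prove that $X\times S_p$ acts transitively on this set. First I would upgrade the hypothesis from the single base vertex $\alpha$ to all of $C$: given $\beta\in C$, choose $g\in X$ with $\alpha^g=\beta$; since automorphisms preserve distance, $X_\beta=g^{-1}X_\alpha g$ is transitive on $\Gamma_1(\beta)=\Gamma_1(\alpha)^g$, so $X_\beta$ is transitive on $\Gamma_1(\beta)$ for \emph{every} $\beta\in C$. Now take two neighbours $\mu(\rep_p(\alpha),i,\nu)$ and $\mu(\rep_p(\alpha'),i',\nu')$. Using that $C$ is an $X$-orbit, pick $x_1\in X$ with $\alpha^{x_1}=\alpha'$; then $\nu^{x_1}\in\Gamma_1(\alpha')$, so by the transitivity just established pick $x_2\in X_{\alpha'}$ with $(\nu^{x_1})^{x_2}=\nu'$. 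Setting $x=x_1x_2$ and choosing any $\sigma\in S_p$ with $i^\sigma=i'$, the action formula $\mu(\rep_p(\alpha),i,\nu)^{(x,\sigma)}=\mu(\rep_p(\alpha^x),i^\sigma,\nu^x)$ yields $\alpha^x=\alpha'$, $\nu^x=\nu'$ and $i^\sigma=i'$, so the first neighbour maps to the second.

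The main obstacle is the \emph{simultaneity} in the last step: a single $x\in X$ must carry the pair $(\alpha,\nu)$ to $(\alpha',\nu')$, not merely send $\alpha$ to $\alpha'$ and $\nu$ to $\nu'$ by possibly different elements. Knowing only that $C_1$ is an $X$-orbit does not suffice, since an element realising $\nu^x=\nu'$ might move $\alpha$ away from $\alpha'$. It is precisely the hypothesis that $X_\alpha$ is transitive on $\Gamma_1(\alpha)$ that resolves this, by allowing one first to fix the base $\alpha'$ and then to adjust $\nu^{x_1}$ to $\nu'$ inside the stabiliser. (For $\delta\geq 3$ this hypothesis is automatic, since each neighbour of $C$ then has a unique nearest codeword; it does real work only when $\delta=2$.)
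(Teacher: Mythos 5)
Your proposal is correct and follows essentially the same route as the paper: reduce the codeword orbit condition to the identity $\rep_p(\alpha)^{(x,\sigma)}=\rep_p(\alpha^x)$, then handle two neighbours $\mu(\rep_p(\alpha),i,\nu)$ and $\mu(\rep_p(\alpha'),i',\nu')$ by first moving $\alpha$ to $\alpha'$ within $X$, then adjusting the adjacent vertex inside the stabiliser $X_{\alpha'}$ (transitive by conjugacy), and finally permuting the blocks with $\sigma\in S_p$. Your explicit description of $(\Rep_p(C))_1$ and the conjugation step transferring transitivity from $X_\alpha$ to every $X_\beta$ are details the paper leaves implicit, but the argument is the same.
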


\begin{proof} It follows from the comments above and Lemma \ref{distpart} that
we only need to 
prove the transitivity on the neighbours of $\Rep_p(C)$.  Let
$\nu_1,\nu_2\in\Rep_p(C)_1$.
Then there exist $i,j$ and $\beta,\gamma\in C$ such that
$\nu_1=\mu(\rep_p(\beta),i,\nu_\beta)$ and 
$\nu_2=\mu(\rep_p(\gamma),j,\nu_\gamma)$ for some adjacent vertices
$\nu_\beta,\nu_\gamma$ of $\beta,\gamma$ in $\Gamma$ 
respectively.  There exists $x\in X$ such that $\beta^x=\gamma$, so
$\nu_1^{(x,1)}=\mu(\rep_p(\gamma),i,\nu_\beta^x)$, and 
$\nu_\beta^x\in\Gamma_1(\gamma)$ since adjacency is preserved by $x$ in
$\Gamma$.  As $X$ acts 
transitively on $C$, and because $X_\alpha$ acts transitively on
$\Gamma_1(\alpha)$, there exists $y\in X_\gamma$ such that 
$\nu_\beta^{xy}=\nu_\gamma$.  By choosing $\sigma\in S_p$ such that
$i^\sigma=j$, 
we deduce that $\nu_1^{(xy,\sigma)}=\nu_2$.    
\end{proof}

Let $C$ be a neighbour transitive code in $H(m,q)$ with $\delta=1$.  Let
$\alpha,\beta\in C$ such that $d(\alpha,\beta)=1$, and 
$\nu\in\Gamma_1(\alpha)\cap C_1$ (such a vertex exists by the transitivity on
$C$).  It follows that
$\nu_1=\mu(\rep_p(\alpha),1,\nu)$,
$\nu_2=\mu(\rep_p(\alpha),1,\beta)\in\Rep_p(C)_1$ in $H(pq,q)$.  However, 
there does not exist $x\in\Aut(C)$ such that $\beta^x=\nu$ because $\Aut(C)$
fixes $C$ setwise, and so
$\nu_1$ and $\nu_2$ are not contained in the same $(\Aut(C)\times S_p)$-orbit. 
Thus the condition
that $\delta\geq 2$ in Lemma \ref{replemma} is essential.  

\section{Examples of Neighbour Transitive Codes}\label{ntrex}

In this section we define four infinite families of codes and prove
that all codes in these families are neighbour transitive.  In Section
\ref{codewgrps}, 
we use these codes to classify diagonally neighbour transitive codes in
$\Gamma=H(m,q)$.  
In all cases $m>1$.  

\begin{definition}\label{codedef} (i) The \emph{repetition code in $H(m,q)$} is 
$$\Rep(m,q)=\{(a^m)\,:\,a\in Q\}=\{\alpha\in V(\Gamma)\,:\,\Num(\alpha)=\{(m,1)\}\}.$$   
  
  \noindent (ii) Let $m<q$, and
  define \begin{align*}\Inj(m,q)=&\,\,\{(\alpha_1,\ldots,\alpha_m)\in
  V(\Gamma)\,:\,\alpha_i\neq\alpha_j\textnormal{ for }i\neq
  j\}\\=&\,\,\{\alpha\in V(\Gamma)\,:\,\Num(\alpha)=\{(1,m)\}\,\}.\end{align*} 
 
  \noindent (iii) Let $m$ be odd with $m\geq 3$ and $q=2$, and define,
  in $\Gamma=H(m,2)$, \begin{align*}W([m/2],2)=&\,\,\{\alpha\in
  V(\Gamma)\,:\, \wt(\alpha)=(m\pm 1)/2\,\}\\=&\,\,\{\alpha\in
  V(\Gamma)\,:\,\Num(\alpha)=\{((m+1)/2,1),((m-1)/2,1)\}\,\}.\end{align*}
  \noindent (iv) Let $p$ be any positive integer, and let $m=pq$, and
  define $$\All(pq,q)=\{\alpha\in V(\Gamma)\,:\,\Num(\alpha)=\{(p,q)\}\,\}$$ 
\end{definition}

\begin{remark}  The codes $\Inj(m,q)$ are examples of \emph{injection codes}, 
which were recently introduced by Dukes \cite{dukes}.  Note also that $\All(pq,q)$ is the 
largest possible frequency permutation array of length $pq$ over an alphabet of size $q$.  
\end{remark}

\begin{theorem}\label{codethm} Let $C$ be one of the codes in Definition
  \ref{codedef}.  Then $C$ is neighbour transitive with 
  $\Aut(C)=\Diag_m(S_q)\rtimes L$.  Moreover, $C$ has minimum
  distance $\delta=m$, $1$, $1$ and $2$ respectively in (i), (ii),
  (iii), (iv) of Definition \ref{codedef}. 
\end{theorem}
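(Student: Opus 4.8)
The plan is to verify the three assertions of the theorem separately: neighbour transitivity, the computation of $\Aut(C)$, and the minimum distance, treating each of the four families from Definition \ref{codedef} in turn. The unifying observation is that in every case $C$ is defined as a level set of the invariant $\Num$, that is $C=\{\alpha\in V(\Gamma):\Num(\alpha)=N\}$ for a fixed value $N$. By Lemma \ref{num}, the subgroup $\Diag_m(S_q)\rtimes L$ preserves $\Num$, so it certainly fixes $C$ setwise; this gives the inclusion $\Diag_m(S_q)\rtimes L\leq\Aut(C)$ for free in all four cases, and reduces the automorphism-group claim to proving the reverse inclusion $\Aut(C)\leq\Diag_m(S_q)\rtimes L$.

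For neighbour transitivity, I would first show that $G:=\Diag_m(S_q)\rtimes L$ acts transitively on $C$ itself. In each family this is a direct orbit computation: $L\cong S_m$ permutes the $m$ entries, so any two vertices with the same multiset of letters lie in one $L$-orbit, and $\Diag_m(S_q)$ then adjusts which letters are used. For instance, for $\Rep(m,q)$ the vertices are $(a^m)$ and $\Diag_m(S_q)$ acts as $S_q$ on the letter $a$; for $\Inj(m,q)$ any injective $m$-tuple is carried to any other by a suitable permutation of entries followed by a relabelling of letters, and similarly for $\All(pq,q)$ and $W([m/2],2)$. Having established transitivity on $C$, I would then show transitivity on $C_1$. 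The efficient route is to fix one codeword $\alpha\in C$ and analyse the stabiliser $G_\alpha$: I would show that $G_\alpha$ is transitive on the set $\Gamma_1(\alpha)\cap C_1$ of neighbours arising from $\alpha$, and combine this with transitivity on $C$ via Lemma \ref{neigact} to conclude $G$ is transitive on all of $C_1$. Concretely, a neighbour of $\alpha$ is obtained by changing a single entry $\alpha_i$ to some $a\neq\alpha_i$, and the stabiliser $G_\alpha$ (containing the stabiliser in $L$ of the entry-type of $\alpha$ together with the stabiliser in $\Diag_m(S_q)$ of its letters) is large enough to move any such single-entry perturbation to any other of the same combinatorial type.

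The minimum-distance computations are the most routine: for $\Rep(m,q)$ two distinct codewords differ in all $m$ entries, so $\delta=m$; for $\Inj(m,q)$ and $W([m/2],2)$ one exhibits two codewords differing in a single entry, forcing $\delta=1$; and for $\All(pq,q)$, since all codewords share the composition $\{(p,q)\}$, two distinct ones must differ in at least two positions (swapping the multiplicities of two letters), and this minimum is attained, giving $\delta=2$. These I would dispatch quickly by producing explicit pairs of codewords.

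The main obstacle is the reverse inclusion $\Aut(C)\leq\Diag_m(S_q)\rtimes L$, since $\Aut(\Gamma)=B\rtimes L$ with $B\cong S_q^m$ is much larger than $\Diag_m(S_q)\rtimes L$ and an arbitrary automorphism $x=(h_1,\ldots,h_m)\sigma$ need not have all $h_i$ equal. The strategy here is to show that the defining invariant $\Num$, together with the structure of $C$, forces the base-group component to be diagonal. I would take $x\in\Aut(C)$, and by composing with a suitable element of $L$ assume $\sigma$ fixes enough structure; then I would exploit the richness of $C$ — the fact that it contains codewords using every letter in prescribed patterns — to pin down the relationship among the $h_i$. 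The cleanest argument is likely to test $x$ against several well-chosen codewords whose images under $x$ must remain in $C$: requiring that $x$ send each such codeword to a vertex with the same $\Num$-value constrains each $h_i$ to act identically on the relevant letters, so that after accounting for the $L$-part the base component is genuinely of the form $(h,\ldots,h)$. Because the available codewords differ across the four families (the singleton-alphabet case $q=2$ for $W$, the sparse injective case for $\Inj$, and so on), this step must be carried out family-by-family, and making sure the supply of test codewords is sufficient in the extremal small cases is where the care is needed.
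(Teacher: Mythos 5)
Your outline matches the paper for most of the proof: the containment $\Diag_m(S_q)\rtimes L\leq\Aut(C)$ via Lemma \ref{num}, transitivity on $C$ and on $C_1$ (the paper identifies $C_1$ explicitly as a $\Num$-level set in each case, which is the clean way to make your stabiliser argument work), and the minimum distances are all as in the paper. The real divergence is in the reverse inclusion $\Aut(C)\leq\Diag_m(S_q)\rtimes L$. The paper normalises a putative $y=(h_1,\ldots,h_m)\sigma\in\Aut(C)$ to $\sigma=1$, $h_2=1$, $h_1\neq 1$ and then runs your ``test codeword'' argument \emph{only} for $\Rep(m,q)$ and $\Inj(m,q)$, where a single codeword suffices because the contradiction ($(a^m)^y$ has two distinct letters; the image of an injective word has a repeated letter) is visible in entries $1$ and $2$ alone, independently of the arbitrary $h_i$ for $i\geq 3$. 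For $W([m/2],2)$ and $\All(pq,q)$ the paper does something structurally different: it shows $W([m/2],2)$ is the last cell $C'_\rho$ of the distance partition of $C'=\Rep(m,2)$ (which is completely regular), and that $\All(pq,q)$ has covering radius $p(q-1)$ with $\All(pq,q)_\rho=\Rep(m,q)$; Lemmas \ref{distpart} and \ref{partfact} then force $\Aut(C)=\Aut(\Rep(m,q))$, reducing to the case already done. This also yields the covering-radius corollary as a by-product. Your direct approach can be pushed through for these two families, but be aware that the naive version fails: since the components $h_i$ for $i\geq 3$ are completely unconstrained, a single well-chosen codeword need not leave $C$ under $y$ (the other coordinates can rebalance the composition). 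You would need either to choose the test codeword adaptively as a function of $y$, or to compare the images of a \emph{pair} of codewords differing by a transposition of two entries, which pins down $a^{h_i}=a^{h_j}$ for all letters and positions. As written, your sketch acknowledges but does not resolve exactly this point, which is where the paper's distance-partition detour earns its keep.
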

 
\begin{proof} It follows from Lemma \ref{num} that, in all cases,
  $\Aut(C)$ contains $H=\Diag_m(S_q)\rtimes L$, and it is clear
  that the minimum distance of $C$ is as stated.  Moreover, it is easy
  to check that the group $H$ acts transitively on $C$ (again in all
  four cases).  Now, the set $C_1$ of neighbours is
\[C_1=\left\{\begin{array}{ll}      
 \{\nu\in V(\Gamma)\,:\,\Num(\nu)=\{(m-1,1),(1,1)\}\,\}&\textnormal{in case
(i)}\\
\{\nu\in V(\Gamma)\,:\,\Num(\nu)=\{(2,1),(1,m-2)\}\,\}&\textnormal{in case
(ii)}\\ 
\{\nu\in V(\Gamma)\,:\,\Num(\nu)=\{((m+3)/2,1),((m-3)/2,1)\}\,\}&\textnormal{in
case (iii)}\\
\{\alpha\in
V(\Gamma)\,:\,\Num(\alpha)=\{(p+1,1),(p,q-2),(p-1,1)\}\,\}&\textnormal{in
  case (iv)} \end{array}\right.\]   
  (noting that in case (iv) we may have $q=2$), and again in all cases
  it is straight forward to check that $H$ is transitive on $C_1$.
  Thus $C$ is $H$-neighbour transitive.  It remains to prove that
  $\Aut(C)=H$.  Suppose to the contrary that $\Aut(C)$ contains
  $y=(h_1,\ldots,h_m)\sigma$ such that $h_i\neq h_j$ for some $i\neq
  j$.  Since $L\leq H\leq\Aut(C)$, we may assume that $\sigma=1$ 
  and that $h_1\neq h_2$.  Moreover, since $\Diag_m(S_q)\leq\Aut(C)$,
  we may further assume that $h_2=1$, so $h_1\neq 1$.  Let $a,b\in Q$  
  such that $a^{h_1}=b\neq a$.  We consider the cases above separately, and
  in the first two cases arrive at a contradiction by exhibiting a
  codeword $\alpha\in C$ such that $\alpha^y\notin C$.   

  (i) If $C=\Rep(m,q)$ then $(a^m)^y|_1=b$ and $(a^m)^y|_2=a$,
  so $(a^m)^y\notin C$.  

  (ii) If $C=\Inj(m,q)$, then $C$ contains a codeword $\alpha$ with
  $\alpha_1=a$ and $\alpha_2=b$.  However, $\alpha^y$ has
  $\alpha^y|_1=\alpha^y|_2=b$, so $\alpha^y\notin C$.   

  (iii) Let $q=2$, $C=W([m/2],2)$ with $m\geq 3$ and $m$ odd, and
  consider $$C'=\Rep(m,2)=\{{\bf{0}}=(0,\ldots,0),{\bf{1}}=(1,\ldots,1)\}.$$ 
Let
  $\alpha\in V(\Gamma)$ such that $\wt(\alpha)=k$ for $1\leq k\leq 
  m-1$.  Then $d(\alpha,{\bf{0}})=k$ and $d(\alpha,{\bf{1}})=m-k$.  If
  $k\leq (m-1)/2$, then $k\leq 
  m-1-k<m-k$, and so $d(\alpha,C')=k$.  If $k\geq (m+1)/2$, then $k\geq
  m+1-k>m-k$, and so $d(\alpha,C')=m-k$.  It follows that $d(\alpha,C')$
  is maximised when $k=(m-1)/2$ or $k=(m+1)/2$, and in both cases
  $d(\alpha,C')=(m-1)/2$.  Thus $C'$ has covering radius
  $\rho=(m-1)/2$.  It also follows that $$C'_\rho=W([m/2],2)=C.$$  It
  is known that $C'$ is completely transitive and hence completely
  regular \cite[Sec. 2]{famctr}.  Moreover, we have just proved
  that $\Aut(C')=H$.  Therefore, by Lemma \ref{partfact},
  $\Aut(C)=\Aut(C')=H$.    

  (iv) Let $\nu\in V(\Gamma)$ and suppose
  $Q(\nu)=\{(a_1,p_1),\ldots,(a_k,p_k)\}$ with $p_1\geq
  p_2\geq\ldots\geq p_k$.  Then $k\leq q$ and $p_1+\ldots+p_k=m=pq$, and 
  in particular $p_1\geq p$.  There exists $\sigma\in L\leq\Aut(C)$ such that
  $\nu^\sigma=(a_1^{p_1},a_2^{p_2},\ldots,a_k^{p_k})$.  Consider the 
  codeword $\alpha=(a_1^p,a_2^p,\ldots,a_q^p)\in C$.  Then
  $\nu^\sigma$ and $\alpha$ agree in at least the first $p$ entries.
  Therefore $d(\nu^\sigma,\alpha)\leq p(q-1)$ and so
  $d(\nu,C)=d(\nu^\sigma,C)\leq p(q-1)$.  Therefore $\rho\leq p(q-1)$.
  Now consider $\nu=(a,\ldots,a)$ for some $a\in Q$.  It follows from
  the definition of $C$ that $d(\nu,\alpha)=p(q-1)$ for all $\alpha\in
  C$.  Therefore $d(\nu,C)=p(q-1)$ and so $\rho=p(q-1)$.  Moreover,
  $\Rep(m,q)\subseteq C_\rho$.  Now suppose $\nu\in C_{\rho}$ and
  $Q(\nu)=\{(a_1,p_1),\ldots,(a_k,p_k)\}$ with $k\geq 2$ and $p_1\geq
  p$.  There exists $\sigma\in L\leq\Aut(C)$ such that
  $\nu^\sigma=(a_1^p,a_2^{p_2},a_1^{p_1-p},a_3^{p_3},\ldots,a_k^{p_k})$.  
  Since $\sigma\in\Aut(C)$, Lemma \ref{distpart} implies that
  $\nu^\sigma\in C_\rho$ also.  Consider the codeword
  $\alpha=(a_1^p,a_2^p,\ldots,a_q^p)$.  Then $\nu^\sigma$ and $\alpha$
  agree in the first $p+p_2>p$, therefore $d(\nu^\sigma,\alpha)\leq
  pq-(p+1)<p(q-1)$, which is a contradiction as $\nu^\sigma\in
  C_\rho$.  It follows that $C_\rho=\Rep(m,q)$.  In particular, by
  Lemma \ref{distpart}, $\Aut(C)$ leaves $\Rep(m,q)$ invariant and so
  $\Aut(C)$ is contained in $\Aut(\Rep(m,q))$, which we have just
  proved is equal to $H$.
\end{proof}

\noindent The proof of Theorem \ref{codethm} yields the following
immediate corollary.  

\begin{corollary} (i) If $q=2$ and $m\geq 3$ is odd, then $C=W([m/2],2)$ 
 has covering radius $\rho=(m-1)/2$ and $C_\rho=\Rep(m,2)$.  Furthermore, 
 $C$ and $C_\rho$ are completely transitive.

  (ii) If $m=pq$ for some $p$, then $C=\All(pq,q)$ has covering radius
  $\rho=p(q-1)$ and $C_\rho=\Rep(m,q)$.
\end{corollary}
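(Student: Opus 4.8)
The plan is to read off both parts directly from the computations already carried out in the proof of Theorem~\ref{codethm}, invoking the duality supplied by Lemma~\ref{partfact}.

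For part (i), I would take as my starting point the facts established in case (iii) of that proof: with $C'=\Rep(m,2)$, it was shown there that $C'$ has covering radius $\rho=(m-1)/2$ and that $C'_\rho=W([m/2],2)=C$, and moreover that $C'$ is completely transitive and hence completely regular. I would then apply Lemma~\ref{partfact} to the completely regular code $C'$. The lemma tells us that $C'_\rho=C$ is itself completely regular, with distance partition $\{C'_\rho,C'_{\rho-1},\ldots,C'_1,C'\}$; in particular the number of cells is unchanged, so $C$ has covering radius $(m-1)/2$, and the final cell of its distance partition is $C_\rho=C'=\Rep(m,2)$. The same lemma gives that $C'$ is completely transitive if and only if $C'_\rho=C$ is, so the complete transitivity of $C'$ forces $C$ to be completely transitive as well; and $C_\rho=C'$ is completely transitive by hypothesis. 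This yields all the assertions of part (i).

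Part (ii) requires no new argument at all: the proof of case (iv) of Theorem~\ref{codethm} established, en route to computing $\Aut(\All(pq,q))$, that $C=\All(pq,q)$ has covering radius $\rho=p(q-1)$ and that $C_\rho=\Rep(m,q)$. These are precisely the two claims of part (ii), so I would simply cite that portion of the proof.

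The only point requiring care---rather than a genuine obstacle---is bookkeeping the roles of $C$ and $C'$ when passing through Lemma~\ref{partfact} in part (i): the code whose covering radius and complete transitivity were verified in case (iii) is $C'=\Rep(m,2)$, and it is the code $C=W([m/2],2)$ that appears as its far cell $C'_\rho$. Once this correspondence is fixed, the reversed distance partition furnished by the lemma delivers the covering radius and the identification $C_\rho=\Rep(m,2)$ immediately.
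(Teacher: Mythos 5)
Your proposal is correct and matches the paper's intent exactly: the corollary is stated as an immediate consequence of the proof of Theorem~\ref{codethm}, with part~(i) obtained by applying Lemma~\ref{partfact} to the completely regular code $C'=\Rep(m,2)$ (whose covering radius and far cell were computed in case~(iii)) and part~(ii) read off directly from the computations in case~(iv). Your care in tracking which of $C$ and $C'$ plays the role of the completely regular code in Lemma~\ref{partfact} is precisely the only bookkeeping the paper's argument requires.
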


\section{Characterising Diagonally Neighbour Transitive Codes.}\label{codewgrps}

In this section we characterise diagonally neighbour transitive codes in
$\Gamma=H(m,q)$.  
However, before we consider such codes, we first prove some interesting results
about connected
subsets $\Delta$ of $V(\Gamma)$ (that is to say, the
subgraph of $\Gamma$ induced on $\Delta$ is connected). 

\begin{lemma}\label{connected} Let $\Delta$ be a connected subset of
  $V(\Gamma)$.  Let $C$ be a code that is a proper subset of
  $\Delta$.  Then $C_1\cap\Delta\neq\emptyset$. 
\end{lemma}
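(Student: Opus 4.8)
The plan is to exploit the combinatorial meaning of connectedness: since the induced subgraph on $\Delta$ is connected, any two vertices of $\Delta$ are joined by a path lying entirely inside $\Delta$, and I will track such a path as it crosses the boundary of $C$. First I would record the two standing facts that make the argument possible. Since $C$ is a proper subset of $\Delta$, the difference $\Delta\setminus C$ is nonempty; and since $C$ is a code it is nonempty, so I may fix a codeword $\alpha\in C$ together with a vertex $\gamma\in\Delta\setminus C$.

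Next, invoking connectedness of the subgraph induced on $\Delta$, I would choose a path
$$\alpha=v_0,\,v_1,\,\ldots,\,v_k=\gamma$$
with every $v_i\in\Delta$ and with $v_{i-1}$ adjacent to $v_i$ in $\Gamma$ for each $i$. The key observation is that this path starts inside $C$ (at $v_0=\alpha$) and ends outside $C$ (at $v_k=\gamma$), so there must be a first index where it leaves $C$. Concretely, let $j$ be the least index with $v_j\notin C$; such a $j$ exists because $v_k\notin C$, and $j\geq 1$ because $v_0\in C$.

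Then $v_{j-1}\in C$ by minimality of $j$, the vertex $v_j\notin C$, and $v_j$ is adjacent to $v_{j-1}\in C$; hence $v_j$ is a vertex not in $C$ but adjacent to a codeword, i.e. $v_j\in C_1$. Since also $v_j\in\Delta$, this gives $v_j\in C_1\cap\Delta$, proving the claim. There is no real obstacle here beyond the two nonemptiness checks: the argument is a routine ``boundary edge'' extraction, and the only point requiring care is the interpretation of \emph{connected subset} as meaning that the chosen path may be taken to lie wholly within $\Delta$, so that the crossing vertex $v_j$ is guaranteed to belong to $\Delta$.
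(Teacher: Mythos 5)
Your argument is correct and is essentially identical to the paper's proof: both fix a codeword and a vertex of $\Delta\setminus C$, take a path between them lying in $\Delta$, and extract the first vertex at which the path leaves $C$, which is then adjacent to a codeword and hence lies in $C_1\cap\Delta$. No further comment is needed.
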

\begin{proof}  Let $\alpha\in C$ and $\beta\in\Delta\backslash C$.
  Since $\Delta$ is a connected subset, there exists a
  path $$\alpha=\alpha^0,\alpha^1,\ldots,\alpha^\ell=\beta$$ such that
  each $\alpha^i\in\Delta$.  Because $\alpha\in C$ and $\beta\notin
  C$, there is a least $i<\ell$ such that $\alpha^i\in C$ and
  $\alpha^{i+1}\notin C$.  Since $d(\alpha^i,\alpha^{i+1})=1$, it
  follows that $\alpha^{i+1}\in C_1$.
\end{proof}

\begin{lemma}\label{delta1con}  The codes $\Inj(m,q)$ (with $1<m<q$)
  and $W([m/2],2)$ (with $m$ odd and $m\geq 3$) are connected subsets
  of $V(\Gamma)$.
\end{lemma}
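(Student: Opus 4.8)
The plan is to handle the two codes separately; in each case I would fix an arbitrary pair of codewords and exhibit a walk between them in the induced subgraph, i.e.\ a sequence of single-entry changes each of which lands inside the code.

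For $\Inj(m,q)$ with $m<q$, I would regard a codeword as an injection $M\to Q$; since every codeword uses exactly $m<q$ letters, at every stage there is at least one \emph{spare} letter not occurring in the current tuple. I would connect an arbitrary $\alpha$ to an arbitrary $\beta$ by correcting entries from left to right, inductively assuming entries $1,\ldots,i-1$ already agree with $\beta$. To fix entry $i$: if it already equals $\beta_i$ there is nothing to do; otherwise $\beta_i$ occurs at some entry $j$, and since $\beta$ is injective we have $\beta_i\notin\{\beta_1,\ldots,\beta_{i-1}\}$, so $j\geq i$, whence $j>i$ and entry $j$ is not yet fixed. I first replace the letter $\beta_i$ at entry $j$ by a spare letter (a single change staying in $\Inj(m,q)$, as the spare is unused), and then replace entry $i$ by $\beta_i$ (again a single change in $\Inj(m,q)$, since $\beta_i$ is now free). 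After at most $m$ such corrections, each a walk of length at most two, we reach $\beta$.

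For $W([m/2],2)$ with $m\geq 3$ odd, the codewords split into those of weight $w_-=(m-1)/2$ and those of weight $w_+=(m+1)/2$, and a single entry change alters the weight by exactly $1$; hence the only moves staying in the code are raising a $0$ to a $1$ from a weight-$w_-$ word (reaching weight $w_+$) and lowering a $1$ to a $0$ from a weight-$w_+$ word (reaching weight $w_-$). I would first connect any two weight-$w_+$ words $\alpha,\beta$: writing their supports as $S,T$ with $|S|=|T|=w_+$, if $S\neq T$ pick $a\in S\setminus T$ and $b\in T\setminus S$; changing entry $a$ from $1$ to $0$ and then entry $b$ from $0$ to $1$ gives a weight-$w_+$ word whose support is nearer to $T$, and iterating drives $|S\bigtriangleup T|$ to $0$. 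Finally, every weight-$w_-$ word has a $0$-entry (as $w_-<m$), so flipping it produces an adjacent weight-$w_+$ word; thus each weight-$w_-$ word is joined to the already-connected family of weight-$w_+$ words, and $W([m/2],2)$ is connected.

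The main obstacle in the $\Inj$ case is maintaining injectivity throughout: one cannot in general set entry $i$ to $\beta_i$ directly, and the parking step via a spare letter is precisely where the hypothesis $m<q$ is indispensable. In the $W$ case the only delicate point is that the weight constraint forbids half of the naive moves, so the argument must be organised around the bipartition into the two weight classes rather than by a direct symmetric-difference reduction.
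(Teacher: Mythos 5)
Your argument follows essentially the same route as the paper's: in both cases you walk from $\alpha$ to $\beta$ inside the code by single-entry changes, using a spare letter (available precisely because $m<q$) to park a conflicting occurrence of $\beta_i$ in the $\Inj(m,q)$ case, and alternating delete/insert moves between the two weight classes in the $W([m/2],2)$ case. The one slip is in the $\Inj(m,q)$ step: when the current entry $i$ differs from $\beta_i$ you assert that $\beta_i$ occurs at some entry $j$ of the current tuple, but it need not occur at all; in that subcase one simply sets entry $i$ to $\beta_i$ directly, which preserves injectivity since $\beta_i$ is then unused, so the omission is harmless (the paper treats this subcase explicitly). Everything else is sound.
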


\begin{proof}  Firstly we consider $\Delta_1=\Inj(m,q)$.  Let
  $\alpha,\beta\in\Delta_1$.  We shall prove that $\alpha,\beta$ are
  connected by a path in $\Delta_1$ using induction on the distance
  $d(\alpha,\beta)$ in $\Gamma$.  This is true if $d(\alpha,\beta)=1$,
  so assume that $d(\alpha,\beta)=w>1$, and the property holds for
  distances less than $w$.  Let $S=\{k\,:\,\alpha_k=\beta_k\}$, $i\in
  M\backslash S$ and $\alpha^*=\nu(\alpha,i,\beta_i)$.  Then
  $\alpha^*$ is adjacent to $\alpha$ in $\Gamma$.  If
  $\beta_i\neq\alpha_k$ for all $k\in M\backslash(S\cup\{i\})$, then
  $\alpha^*\in\Delta_1$ and $d(\alpha^*,\beta)=w-1$.  Therefore, by
  the inductive hypothesis, $\alpha^*$ and $\beta$ are connected by a
  path in $\Delta_1$ and hence so are $\alpha$ and $\beta$.  Thus we
  may assume that $\beta_i=\alpha_j$ for some $j\in
  M\backslash(S\cup\{i\})$.  We  
  note that $j$ is unique since $\alpha\in\Delta_1$.  Also
  $\alpha_j^*=\alpha_i^*$ and so $\alpha^*\notin\Delta_1$.  Since
  $m<q$, there exists
  $a\in Q\backslash\{\alpha_1,\ldots,\alpha_m\}$.  Let
  $\alpha^{\diamondsuit}=\nu(\alpha,j,a)$.  Then
  $\alpha^{\diamondsuit}\in\Delta_1\cap\Gamma_1(\alpha)$.  If  
  $a=\beta_j$ then 
  $d(\alpha^{\diamondsuit},\beta)=w-1$.  Therefore, by the inductive
  hypothesis, $\alpha^{\diamondsuit}$ and $\beta$ are connected by a
  path in $\Delta_1$ and hence so are $\alpha$ and $\beta$.  If
  $a\neq\beta_j$ then $d(\alpha^{\diamondsuit},\beta)=w$.  In this
  case let $\alpha^{\heartsuit}=\nu(\alpha^{\diamondsuit},i,\beta_i)$.  It
  follows that
$\alpha^{\heartsuit}\in\Delta_1\cap\Gamma_1(\alpha^{\diamondsuit})$
  and $d(\alpha^{\heartsuit},\beta)=w-1$.  Therefore by the inductive
  hypothesis, $\alpha^{\heartsuit}$ and $\beta$ are connected by a
  path in $\Delta_1$ and hence so are $\alpha$ and $\beta$.  Thus
  $\Delta_1$ is connected by induction.

  We now consider the set $\Delta_2=W([m/2],2)$. 
  Let $\alpha,\beta\in\Delta_2$ such that $\wt(\alpha)=\wt(\beta)=(m+1)/2$. 
Furthermore let 
  $\mathcal{S}=\supp(\alpha)\cap\supp(\beta)$,
$\mathcal{J}=\supp(\alpha)\backslash\mathcal{S}=\{j_1,\ldots,j_\ell\}$ and 
  $\mathcal{K}=\supp(\beta)\backslash\mathcal{S}=\{k_1,\ldots,k_\ell\}$.  Let
$\alpha^0=\alpha$ and for $i=1,\ldots,2\ell$ let $\alpha^i$ be 
  the vertex in $V(\Gamma)$ with \[\supp(\alpha^i)=\left\{\begin{array}{ll}
  \supp(\alpha^{i-1})\backslash\{j_{(i+1)/2}\}&\textnormal{if $i$ is odd}\\ 
  \supp(\alpha^{i-1})\cup\{k_{i/2}\}&\textnormal{if $i$ is
even.}\end{array}\right.\]
  It follows that $\wt(\alpha^i)=(m-1)/2$ or $(m+1)/2$ if $i$ is odd or even
respectively.  Moreover, 
  $d(\alpha^i,\alpha^{i-1})=1$ for $i=1,\ldots,2\ell$.  Thus
$$\alpha=\alpha^0,\alpha^1,\ldots,\alpha^{2\ell}=\beta$$
  is a path in $\Delta_2$ from $\alpha$ to $\beta$.  A similar argument shows
that there exists a path in $\Delta_2$
  between two vertices of weight $(m-1)/2$.  Now suppose
$\alpha,\beta\in\Delta_2$ are such that they have different weights with, say, 
  $\alpha$ having weight $(m-1)/2$.  Let
$k\in\supp(\beta)\backslash\supp(\alpha)$ and $\alpha^1$ be such that 
  $\supp(\alpha^1)=\supp(\alpha)\cup\{k\}$.  Then $\alpha^1$ is adjacent to
$\alpha$ and has weight $(m+1)/2$, 
  and as we have just shown, there exists a path in $\Delta_2$ from $\alpha^1$
to $\beta$.     
\end{proof}

\begin{theorem}\label{main2} Let $C$ be a diagonally $X$-neighbour transitive
code in
  $\Gamma=H(m,q)$.  Then one of the following holds:
\begin{itemize}
\item[(i)] $C=\{(a,\ldots,a)\}$ for some $a\in Q$;  
\item[(ii)] $C=\Rep(m,q)$;
\item[(iii)] $C=\Inj(m,q)$ where $m<q$;
\item[(iv)] $C=W([m/2],2)$ where $m\geq 3$ and odd;
\item[(v)] there exists a positive integer $p$ such that $m=pq$ and
  $C$ is contained in $\All(pq,q)$. 
\end{itemize}
\end{theorem}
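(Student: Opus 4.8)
The plan is to exploit Corollary \ref{specprop}, which tells us that for a diagonally $X$-neighbour transitive code $C$, the invariant $\Num(\nu)$ is constant on $C$. Thus there is a single value $\Num(\alpha)=\{(p_1,s_1),\ldots,(p_j,s_j)\}$ shared by all codewords $\alpha\in C$. The structure of this common $\Num$-value will determine which case of the theorem we fall into. I would first treat the trivial extremes: if $C$ is a single vertex then $\Num(\alpha)=\{(m,1)\}$ forces $\alpha=(a^m)$ and we are in case (i). Otherwise $|C|\geq 2$, and since $X$ acts transitively on $C$, I would use Lemma \ref{num} to understand how the stabiliser moves letters around while fixing the multiset of multiplicities $\{p_1,\ldots,p_k\}$.

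\textbf{The main case analysis.} The heart of the argument is to show that the common $\Num$-value must be one of the four special forms $\{(m,1)\}$, $\{(1,m)\}$, $\{((m+1)/2,1),((m-1)/2,1)\}$, or $\{(p,q)\}$, matching cases (ii)--(v). The natural approach is to suppose $\Num(\alpha)$ is \emph{not} of the frequency-permutation-array form $\{(p,q)\}$ (case (v)) and then derive that we are in one of the degenerate families. Here I expect the key mechanism to be the following: if the multiplicities are not all equal, the code sits inside some $\Pi(\mathcal I)$ but is a \emph{proper} subset of a larger connected invariant set, and then neighbour transitivity clashes with Lemma \ref{connected}. Concretely, for the codes $\Inj(m,q)$ and $W([m/2],2)$, Lemma \ref{delta1con} establishes they are connected; if $C$ were a proper subcode of such a connected set $\Delta$ equal to the full $\Num$-class, then $C_1\cap\Delta\neq\emptyset$ by Lemma \ref{connected}, meaning some neighbour shares the codeword $\Num$-value, contradicting $\Num(\nu)=\Num(\alpha)$ being different on $C$ versus $C_1$. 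This forces $C$ to equal the \emph{entire} $\Num$-class, giving cases (iii) and (iv) exactly, rather than a proper piece of them.

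\textbf{Eliminating intermediate multiplicity patterns.} The delicate part is ruling out $\Num$-values that are neither all-equal nor one of the listed exceptional shapes, for instance a value like $\{(p_1,1),(p_2,s_2)\}$ with $p_1\neq p_2$ and more than two distinct multiplicities, or mixed patterns that do not force $k=q$. For these I would argue that the neighbour set $C_1$ can be reached by a single symbol-change that \emph{preserves} the $\Num$-value: if some letter appears with multiplicity strictly between the extreme multiplicities, one can modify one coordinate to produce another vertex with the same $\Num$-value but not in $C$, which would have to lie in $C_0=C$ by the constancy of $\Num$ on the distance partition, yet be a genuine neighbour — a contradiction. This is essentially the same local move used in the proof of Theorem \ref{codethm} when computing the neighbour sets $C_1$ there. \textbf{I expect this elimination of the intermediate multiplicity profiles to be the main obstacle}, since it requires checking that every $\Num$-value outside the listed families admits such a $\Num$-preserving local perturbation, and the bookkeeping splits according to how many distinct multiplicities occur and whether $k<q$ or $k=q$.

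\textbf{Assembling the cases.} Finally I would collect the outcomes. When all multiplicities are equal and $k=q$, we get $\Num(\alpha)=\{(p,q)\}$ with $m=pq$, so $C\subseteq\All(pq,q)$, which is case (v); the containment (rather than equality) is all that is claimed, consistent with $\All(pq,q)$ itself being neighbour transitive by Theorem \ref{codethm}. When the multiplicities collapse to $\{(m,1)\}$ with $|C|\geq 2$, transitivity of $X$ on $C$ together with Lemma \ref{num} shows $C$ must consist of several repetition words, forcing $C=\Rep(m,q)$ (case (ii)). The two connected exceptional classes give cases (iii) and (iv) via the connectivity argument above. This exhausts the possibilities, completing the proof of the statement of Theorem \ref{main2}.
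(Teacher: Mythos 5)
Your high-level skeleton matches the paper's: use Corollary \ref{specprop} to fix a common $\Num$-value on $C$, handle the all-equal-multiplicity case as (v), and use Lemmas \ref{connected} and \ref{delta1con} to force $C$ to be all of $\Inj(m,q)$ or $W([m/2],2)$ rather than a proper subcode. However, the step you yourself flag as the main obstacle --- eliminating the intermediate multiplicity profiles --- rests on a mechanism that does not work. You propose to perturb one coordinate of a codeword $\alpha$ to get a vertex $\nu$ with $\Num(\nu)=\Num(\alpha)$ and argue that $\nu$ ``would have to lie in $C_0=C$ by the constancy of $\Num$ on the distance partition, yet be a genuine neighbour --- a contradiction.'' Neither half of this holds. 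Corollary \ref{specprop} says $\Num$ is constant on $C_0$ and on $C_1$; it does not say that the value on $C_1$ differs from the value on $C_0$, so a neighbour sharing the codewords' $\Num$-value is perfectly consistent with lying in $C_1$. And even if $\nu$ did lie in $C$, a codeword adjacent to another codeword is no contradiction: the codes $\Inj(m,q)$ and $W([m/2],2)$ that actually occur in the conclusion have minimum distance $1$. The paper's elimination works differently: for each offending composition type it exhibits \emph{two} neighbours $\nu_1,\nu_2$ of a normalised codeword $\bar\alpha$ with $\Num(\nu_1)$, $\Num(\nu_2)$ and $\Num(\bar\alpha)$ pairwise distinct (first to force $k=q$, then via the three lines of Table \ref{tableneigh} to force $t=2$, $p_2=p-1$, $q=2$); both $\nu_i$ must then lie in $C_1$, contradicting constancy of $\Num$ on $C_1$. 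That two-neighbour construction is the real combinatorial content of the proof and is absent from your proposal.

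Two smaller points. First, in the case $\Num(\alpha)=\{(m,1)\}$ with $|C|\geq 2$, transitivity and Lemma \ref{num} only give $C\subseteq\Rep(m,q)$; to rule out a proper subset of size at least $2$ you need more (the paper invokes $1$-regularity from Remark \ref{regrem} together with the classification of $1$-regular codes of minimum distance $m$ from \cite{famctr}; e.g.\ $\{(0^m),(1^m)\}$ in $H(m,3)$ is not $1$-regular because $(1,0,\ldots,0)$ and $(2,0,\ldots,0)$ have different distance profiles to the code). Second, your opening claim that a singleton code forces $\Num(\alpha)=\{(m,1)\}$ is false as stated --- a singleton $\{\alpha\}$ with $\alpha\in\All(pq,q)$ is diagonally neighbour transitive and falls under case (v), not case (i).
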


\begin{proof}  Let $\alpha\in C$ and suppose that $\alpha$ has
  composition $$Q(\alpha)=\{(a_1,p_1),\ldots,(a_k,p_k)\}$$ with
  $p_1\geq p_2\geq\ldots\geq p_k$ and $k\leq q$.  Let
  $H=\Diag_m(S_q)\rtimes L$.  We break our analysis up into the
  cases $k=1$ and $k\geq 2$.   

 {\underline{\bf{Case $k=1$:}}}  In this case
 $\alpha=(a_1,\ldots,a_1)$ and $$C=\alpha^X\subseteq
 \alpha^{H}=\Rep(m,q).$$ If $|C|=1$, then  
 $X\leq H_\alpha=\Diag_m(S_{q-1})\rtimes L$ and 
 $C_1=\{\nu(\alpha,i,b)\,:\,1\leq i\leq m,\,b\in
 Q\backslash\{a_1\}\}$.  As $H_\alpha$ fixes setwise $C$ and $C_1$,
 and is transitive on both, it follows that $C$ is
 $H_\alpha$-neighbour transitive.  By the above reduction we only
 find $C=\{(a_1,\ldots,a_1)\}$, but of course the examples here are
 $\{(a,\ldots,a)\}$ for all $a\in Q$, as in (i).  Suppose now that
 $|C|\geq 2$. Since $C\subseteq\Rep(m,q)$ it follows that $\delta=m$.
 By Remark \ref{regrem}, $C$ is $1$-regular, and because $\delta=m$,
 $C$ is equivalent to $\Rep(m,q)$ by \cite[Sec. 2]{famctr}.  Thus
 $|C|=q$ and $C=\Rep(m,q)$, as in (ii).   

 {\underline{\bf{Case $k\geq 2$:}}}  Suppose first that $p_1=1$.  Then
 $k=m$ and \[\alpha\in\hat{C}=\left\{\begin{array}{ll}
 \All(q,q)&\textnormal{if $m=q$}\\\Inj(m,q)&\textnormal{if
   $m<q$.} \end{array}\right.\]  Since $H$ fixes $\hat{C}$ and
 $X\leq H$, we have that $C=\alpha^X\subseteq\alpha^{H}=\hat{C}.$  If $m=q$
 then (v) holds.  Thus assume that $m<q$ and $\hat{C}=\Inj(m,q)$.  In
 this case $C_1$ contains $\nu=\nu(\alpha,m,\alpha_1)$ and
 $\Num(\nu)=\{(2,1),(1,m-2)\}$.  By Corollary \ref{specprop},
 $\Num(\nu')=\Num(\nu)$ for all $\nu'\in C_1$, and in particular,
 $C_1\cap\hat{C}=\emptyset$.  If $C$ is a proper subset of $\hat{C}$
 then, by Lemmas \ref{connected} and \ref{delta1con}, we have that
 $C_1\cap\hat{C}\neq\emptyset$, which is a contradiction.  Thus
 $C=\Inj(m,q)$ and (iii) holds.     

 We can now assume that $p_1\geq 2$.  As $S_m$ acts $m$-transitively,
 there exists $\sigma\in L$ such that
 $\alpha^\sigma=(a_1^{p_1},\ldots,a_k^{p_k})\in C^\sigma$.  By Lemma
 \ref{distpart}, $C^\sigma$ is $X^{\sigma}$-neighbour
 transitive, and as $\Diag_m(S_q)$ is centralised by $L$, it
 follows that $X^\sigma\leq H$.  Let
 $\bar{X}=X^\sigma$, $\bar{\alpha}=\alpha^\sigma$ and
 $\bar{C}=C^\sigma$.  Suppose that $k<q$.  Then $q\geq 3$ and there 
 exists $a\in Q$ that does not occur in $\bar{\alpha}$.  Consider
 $\nu_1=(a,a_1^{(p_1-1)},a_2^{p_2},\ldots,a_k^{p_k})$ and
 $\nu_2=(a_1^{(p_1+1)},a_2^{(p_2-1)},\ldots,a_k^{p_k})$, which are both
 adjacent to $\bar{\alpha}$.  Then $\Num(\nu_1)$, $\Num(\nu_2)$ and
 $\Num(\bar{\alpha})$ are pairwise distinct, which is a contradiction
 to Corollary \ref{specprop}.  Thus $k=q$.  If $p_j=p_1$ for all $j$,
 then $m=pq$ (where $p=p_1$) and $\Num(\bar{\alpha})=\{(p,q)\}$. 
 Thus $\bar{\alpha}\in\All(pq,q)$
 and $$\bar{C}=\bar{\alpha}^{\bar{X}}\subseteq\bar{\alpha}^{H}=\All(pq,q).$$ 
 As $\sigma\in\Aut(\All(pq,q))$, it follows that
 $C=\bar{C}^{\sigma^{-1}}\subseteq\All(pq,q)$ and (v) holds.  Thus we
 now assume that $p_1> p_k$.  Let $t$ be minimal such that $p_1> p_t$,
 that is, $p=p_1=p_2=\ldots=p_{t-1}>p_t$, and note that $t\geq 2$.
 Define $\nu_1\in\Gamma_1(\bar{\alpha})$
 by \[\nu_1=\left\{\begin{array}{ll}
 (a_1^p,\ldots,a_{t-2}^p,a_{t-1}^{p+1},a_t^{p_t-1},\ldots,a_q^{p_q})&\textnormal
{if
   $t\geq
   3$}\\
(a_1^{p+1},a_t^{p_t-1},a_{t+1}^{p_{t+1}},\ldots,a_q^{p_q})&\textnormal{if
   $t=2$} \end{array}\right.\] and note that $(p+1,1)\in\Num(\nu_1)$
 for all $t$, and $(p,t-2)\in\Num(\nu_1)$ if $t\geq 3$, while no
 element of $\Num(\nu_1)$ has first entry $p$ if $t=2$.  As
 $(p,t-1)\in\Num(\bar{\alpha})$ it follows that
 $\Num(\nu_1)\neq\Num(\bar{\alpha})$, and so Corollary \ref{specprop}
 implies that $\nu_1\in \bar{C}_1$.  We claim that $t=2$,
 $p_t=p_2=p-1$ and $q=2$.      

 Assume to the contrary that the claim is false.  Then $t$, $p_2$, $q$
 satisfy the conditions in column $2$ of Table \ref{tableneigh} for
 exactly one of the lines.  For each line of Table \ref{tableneigh},
 let $\nu_2$ be the vertex in column $3$.  In each case
 $\nu_2\in\Gamma_1(\bar{\alpha})$ and $\Num(\nu_2)\neq\Num(\bar{\alpha})$.  We
also have that
 $\Num(\nu_1)\neq\Num(\nu_2)$: this is clear in lines $2$ and $3$ 
 since then no element of $\Num(\nu_2)$ has first entry $p+1$, while
 in line $1$, $(p,t-3)\in\Num(\nu_2)$ if $t>3$ and no entry of
 $\Num(\nu_2)$ has first entry $p$ if $t=3$.  Since
 $\Num(\nu_2)\neq\Num(\bar{\alpha})$, it follows from Corollary
 \ref{specprop} that $\nu_2\in C_1$.  However, Corollary
 \ref{specprop} then implies that $\Num(\nu_2)=\Num(\nu_1)$, which is
 a contradiction.  Thus the claim is proved. \begin{table}
\caption{Neighbours of $\bar{\alpha}$}
\label{tableneigh}  
\begin{tabular}{lll}
\hline\noalign{\smallskip}
Line & Case & $\nu_2\in\Gamma_1(\bar{\alpha})$  \\
\noalign{\smallskip}\hline\noalign{\smallskip}
1&$t>2$&$(a_1^{p+1},a_2^{p-1},a_3^{p},\ldots,a_{t-1}^p,a_t^{p_t},\ldots,a_q^{p_q
})$\\
2&$t=2$, $p_2\leq p-2$&$(a_1^{p-1},a_2^{p_2+1},a_3^{p_3},\ldots,a_q^{p_q})$\\
3&$t=2$, $p_2=p-1$, $q\geq 3$&$(a_1^p,a_2^p,a_3^{p_3-1},\ldots,a_q^{p_q})$\\
\noalign{\smallskip}\hline
\end{tabular}
\end{table}
As $t=2$, $p_2=p-1$ and $q=2$, it follows that $m=2p-1\geq 3$ and
$\bar{\alpha}=(a_1^p,a_2^{p-1})$.  By identifying $Q$ with $\{0,1\}$,
it follows that $\bar{\alpha}$ has weight $p=(m+1)/2$ or
$p-1=(m-1)/2$, and therefore so does 
$\alpha=\bar{\alpha}^{\sigma^{-1}}$, since $\sigma\in L$.  Thus
$\alpha \in W([m/2],2)$ and $$C=\alpha^X\subseteq
\alpha^{H}=W([m/2],2).$$  Let $\nu\in\Gamma_1(\alpha)$.  Then $\nu$
has weight $(m+3)/2$ or $(m-3)/2$ and
$\Num(\nu)=\{((m+3)/2,1),((m-3)/2,1)\}$.  Thus
$\Num(\nu)\neq\Num(\alpha)$ and Corollary \ref{specprop} implies that
$\nu\in C_1$.  Hence Corollary \ref{specprop} implies that
$\Num(\nu')=\Num(\nu)$ for all $\nu'\in C_1$, in particular $C_1\cap
W([m/2],2)=\emptyset$.  If $C$ is a proper subset of $W([m/2],2)$
then, by Lemmas \ref{connected} and \ref{delta1con}, $C_1\cap
W([m/2],2)\neq\emptyset$, which is a contradiction.  Thus
$C=W([m/2],2)$ and (iv) holds.
\end{proof}

\begin{remark} Theorem \ref{main2} gives us a proof of Theorem
  \ref{main1}. None of the codes in cases (i)--(iv) of Theorem
  \ref{main2} are constant composition codes, and any subset of
  $\All(pq,q)$ is necessarily a frequency permutation array. 
\end{remark}

\section{Neighbour transitive frequency permutation arrays}\label{sec1codect}

We first consider frequency permutation arrays for which each letter from
the alphabet $Q$ appears exactly once in each codeword.  
Such codes are known as \emph{permutation codes}. Permutation codes were first
examined in the mid 1960s and 1970s \cite{blake,blakeetal,frankl,slepian}, but there
has been renewed interest due to the possible applications in powerline
communication, see \cite{bailey,chu1,keevash,smith2} for example.  

In order to describe permutation codes, we identify the alphabet $Q$ with the
set $\{1,\ldots,q\}$ 
and consider codes in the Hamming graph $\Gamma=H(q,q)$.  For $g\in S_q$ 
we define the vertex $$\alpha(g)=(1^g,\ldots,q^g)\in V(\Gamma).$$  Recall that
for a subset
$T\subseteq S_q$, we define the \emph{permutation code generated by $T$} to be
the code $$C(T)=\{\alpha(g)\in V(\Gamma)\,:\,g\in T\}.$$  For a permutation
$g\in
S_q$, the \emph{fixed point set of $g$} is the set $\fix(g)=\{a\in
Q\,:\,a^g=a\}$, and the \emph{degree of $g$} is equal to
$\deg(g)=q-|\fix(g)|$. For $g,h\in S_q$, it is known that
$d(\alpha(g),\alpha(h))=\deg(g^{-1}h)$ 
\cite{bailey}.  Thus, for $T\subseteq S_q$,
it holds that $C(T)$ has minimum distance
$\delta=\min\{\deg(g^{-1}h)\,:\,g,h\in T\,,\,g\neq h\}$, and if $T$ is a group,
this is called 
the \emph{minimal degree of $T$} \cite{blakeetal}.        

Recall that the Hamming graph $\Gamma$ has automorphism group
$\Aut(\Gamma)=B\rtimes L$ where $B\cong S_q^q$ and
$L\cong S_q$.  To distinguish between automorphisms of $\Gamma$ and permutations
in $S_q$, we introduce the
following notation.  For $y\in S_q$ we let
$x_y=(y,\ldots,y)\in B$, and we let $\sigma(y)$ denote the
automorphism induced by $y$ in $L$.  For $\alpha(g)\in V(\Gamma)$,
$$\alpha(g)^{x_y}=(1^g,\ldots,q^g)^{(y,\ldots,y)}=(1^{gy},\ldots,q^{gy}
)=\alpha(gy).$$
Now, suppose that $i^y=j$ for $i,j\in Q$.  Then, by considering
$\alpha(g)$ as the $q$-tuple $(\alpha_1,\ldots,\alpha_q)$, it holds 
that $\alpha(g)^{\sigma(y)}|_j=\alpha_i=i^g=j^{y^{-1}g}$.  Thus
$\alpha(g)^{\sigma(y)}=\alpha(y^{-1}g)$, 
proving Lemma \ref{start}.       

\begin{lemma}\label{start} Let $\alpha(g)\in V(\Gamma)$ and $y\in S_q$.
  Then $\alpha(g)^{x_y}=\alpha(gy)$ and
  $\alpha(g)^{\sigma(y)}=\alpha(y^{-1}g)$.
\end{lemma}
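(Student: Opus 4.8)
The plan is to verify both identities by direct computation from the definitions of the two actions given in Section~\ref{secdef}, since each identity is determined coordinate-by-coordinate. Throughout I use the right-action convention fixed earlier, under which $(i^g)^y = i^{gy}$, and I write $\alpha(g) = (\alpha_1,\ldots,\alpha_q)$ with $\alpha_i = i^g$.

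For the first identity I would simply apply the definition of the action of the base group $B$ on a vertex. Since $x_y = (y,\ldots,y)$, the $i$th coordinate of $\alpha(g)^{x_y}$ is $\alpha_i^{\,y} = (i^g)^y = i^{gy}$, which is exactly the $i$th coordinate of $\alpha(gy)$. Hence $\alpha(g)^{x_y} = \alpha(gy)$. This step is immediate and involves no subtlety beyond the exponent-composition convention.

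For the second identity I would track the effect of the position-permutation $\sigma(y) \in L$. By the defining formula $\alpha^\sigma = (\alpha_{1\sigma^{-1}},\ldots,\alpha_{q\sigma^{-1}})$, the $j$th coordinate of $\alpha(g)^{\sigma(y)}$ is $\alpha_{jy^{-1}}$, where I use that $\sigma(y)$ is induced by $y$, so its inverse acts on the index as $y^{-1}$. Writing $i = j^{y^{-1}}$ (equivalently $i^y = j$), this coordinate equals $\alpha_i = i^g = (j^{y^{-1}})^g = j^{y^{-1}g}$, which is precisely the $j$th coordinate of $\alpha(y^{-1}g)$. Therefore $\alpha(g)^{\sigma(y)} = \alpha(y^{-1}g)$.

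The only point requiring care, and the place where an inverse error would most easily slip in, is the second identity: because the $L$-action permutes positions via $\sigma^{-1}$ rather than $\sigma$, it is $y^{-1}$ rather than $y$ that appears on the left of $g$. I would sanity-check this against compatibility with composition, confirming that the formula reproduces $\alpha(g)^{\sigma(y_1)\sigma(y_2)} = \alpha(y_2^{-1}y_1^{-1}g) = \alpha((y_1y_2)^{-1}g)$, consistent with $y \mapsto \sigma(y)$ being a homomorphism into $L$. No genuine obstacle arises; the statement is a routine verification of the two defining actions.
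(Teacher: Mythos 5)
Your proof is correct and follows essentially the same route as the paper's: a coordinate-wise computation for the $x_y$ identity, and for the $\sigma(y)$ identity the same substitution $i^y=j$ giving $\alpha(g)^{\sigma(y)}|_j=\alpha_i=i^g=j^{y^{-1}g}$. The compatibility check with composition is a nice extra confirmation but not needed.
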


Recall from Remark \ref{regrem} that neighbour transitive codes are $1$-regular.
It turns out that there exists exactly one $1$-regular permutation code with
minimum distance $\delta=2$.  Before we prove this we introduce the following concepts.  We regard $1\in Q$ as
the analogue of zero from linear codes, and define the \emph{weight} of a vertex 
$\beta\in V(\Gamma)$ to be $d(\alpha,\beta)$, where $\alpha=(1,\ldots,1)\in
V(\Gamma)$.  For $\beta=(\beta_i), \gamma=(\gamma_i) \in V(\Gamma)$, 
we say $\beta$ is \emph{covered} by $\gamma$ if $\beta_i=\gamma_i$ for each $i$
such that $\beta_i\neq 1$.  Furthermore, we say that a non-empty set $\mathcal{D}$ of
vertices of weight $k$ in $H(q,q)$ is a \emph{$q$-ary $t-(q,k,\lambda)$ design} 
if for every vertex $\nu$ of weight $t$, there exist exactly $\lambda$ vertices in $\mathcal{D}$ that cover $\nu$.  

\begin{lemma}\label{1reg} Let $T$ be a subset of $S_q$.  Then $C(T)$ is
$1$-regular with $\delta=2$ if and only if $T=S_q$.    
\end{lemma}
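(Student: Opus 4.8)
The plan is to prove both implications, with the converse being the substantive one.

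For the easy implication, suppose $T = S_q$. Then $C(S_q) = \{\alpha(g) : g \in S_q\}$ is precisely the set of vertices of $H(q,q)$ all of whose entries are distinct, so $C(S_q) = \All(q,q)$. By Theorem \ref{codethm}(iv) with $p = 1$, this code is neighbour transitive with minimum distance $\delta = 2$, and by Remark \ref{regrem} every neighbour transitive code is $1$-regular; hence $C(S_q)$ is $1$-regular with $\delta = 2$, as required.

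For the forward implication, suppose $C(T)$ is $1$-regular with $\delta = 2$. The first step is a local analysis of the neighbours. Since $\delta = 2$, no neighbour is a codeword, and a neighbour $\gamma$ is obtained from some codeword $\alpha(g)$ by recolouring a single entry; this creates exactly one repeated letter (appearing twice) and one missing letter, so $\Num(\gamma) = \{(2,1),(1,q-2)\}$. A codeword lies at Hamming distance $1$ from $\gamma$ only if it arises by recolouring one of the two positions carrying the repeated letter to the missing letter; hence \emph{at most two} codewords lie at distance $1$ from $\gamma$, and two such codewords differ by transposing a pair of entries. Writing $b$ for the number of codewords at distance $1$ from a neighbour, the $1$-regularity condition (the case $i=1$, $k=1$) forces $b$ to be the same for every neighbour in $C_1$.

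The second step pins down $b = 2$ using $\delta = 2$. Since $\delta = 2$, there exist $g,h \in T$ with $d(\alpha(g),\alpha(h)) = \deg(g^{-1}h) = 2$, so $h = g\tau$ for a transposition $\tau$; the vertex agreeing with $\alpha(g)$ except at the one position where it is made to agree with $\alpha(h)$ is a neighbour lying at distance $1$ from both codewords, forcing $b = 2$ there, and hence $b = 2$ for every neighbour. Now fix any $g \in T$ and any transposition $\tau = (u\ v)$: recolouring in $\alpha(g)$ the position carrying $v$ to the letter $u$ produces a neighbour whose two candidate codewords are $\alpha(g)$ and $\alpha(g\tau)$ (this identification uses the right-multiplication action of Lemma \ref{start}). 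As $b = 2$, the second must be a codeword, so $g\tau \in T$. Since $\tau$ was an arbitrary transposition and transpositions generate $S_q$, iterating gives $gS_q \subseteq T$, whence $T = S_q$.

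The main obstacle is the local combinatorial step: correctly recognising that each neighbour is a ``midpoint'' of a distance-$2$ pair of codewords and therefore sees at most two codewords, and matching the single-entry recolourings of $\alpha(g)$ with right multiplication by transpositions via Lemma \ref{start}. Once the constant $b$ is forced to equal $2$, the closure-and-generation argument yielding $T = S_q$ is routine.
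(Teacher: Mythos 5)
Your proof is correct, but it takes a genuinely different route from the paper's. The paper works with the $i=0$, $k=2$ instance of $1$-regularity: it invokes a result of Goethals and van Tilborg to show that $\Gamma_2(\alpha)\cap C$ is a $q$-ary $1$-$(q,2,\lambda)$ design, counts incidences to get $|\Gamma_2(\alpha(g_1))\cap C(T)|=q(q-1)\lambda/2\geq q(q-1)/2$, matches this against the upper bound given by the $q(q-1)/2$ transpositions, and concludes that $\alpha(tg_1)\in C(T)$ for \emph{every} transposition $t$; the generation step then peels transpositions off on the left. You instead use the $i=1$, $k=1$ instance: each neighbour (having one doubled letter and one missing letter) sees at most two codewords, the existence of a distance-$2$ pair of codewords forces some neighbour to see exactly two, and $1$-regularity propagates the value $b=2$ to all neighbours, giving closure of $T$ under right multiplication by arbitrary transpositions. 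Your argument is more elementary and self-contained --- it avoids the external design-theoretic input entirely and uses only the definition of $1$-regularity --- whereas the paper's version connects the lemma to the classical theory of uniformly packed codes. The only cosmetic blemish is that for $q=2$ the multiset $\Num(\gamma)=\{(2,1),(1,q-2)\}$ degenerates (the missing letter leaves no singletons), but this does not affect the argument, which goes through verbatim in that case.
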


\begin{proof}  The reverse direction follows from Theorem \ref{codethm} and observing
that $\All(q,q)=C(S_q)$.  To prove the converse, we first claim that there exists a positive integer $\lambda$
such that $|\Gamma_2(\alpha(t))\cap C(T)|=q(q-1)\lambda/2$ for all $\alpha(t)\in C(T)$.
The code $C(T)$ is equivalent to a $1$-regular code $C$ with minimum distance
$2$ that contains $\alpha=(1,\ldots,1)$.  By interpreting a result of Goethals and van Tilborg \cite[Thm. 9]{upc}, it
follows that $\Gamma_2(\alpha)\cap C$ forms a $q$-ary $1-(q,2,\lambda)$ design for some
positive integer $\lambda$.  By counting the pairs $(\nu,\beta)\in \Gamma_1(\alpha)\times (\Gamma_2(\alpha)\cap
C)$ such that $\beta$ covers $\nu$, we deduce that $|\Gamma_2(\alpha)\cap C|=q(q-1)\lambda/2$.  
As $C$ is $1$-regular, this holds for all codewords $\beta\in C$. 
Furthermore, this property is also preserved by equivalence, so the claim holds.    

Let $\alpha(g_1)\in C(T)$ and $S=\Gamma_2(\alpha(g_1))\cap C(T)$.  As $C(T)$
is $1$-regular with $\delta=2$, it follows that $S \neq\emptyset$.  Let
$\alpha(g_2)\in S$.  Then $d(\alpha(g_2g^{-1}_1),\alpha(1))=2$, and
so $g_2g^{-1}_1=t'$ is a transposition.  Consequently, for each $\alpha(g)\in
S$ there exists a transposition $t\in S_q$ such that $g=tg_1$.  There are
exactly $q(q-1)/2$ transpositions in $S_q$, so $|S|\leq q(q-1)/2$.  However, by the above claim, $|S|\geq q(q-1)/2$.  
Thus $S=\{\alpha(tg_1):\,t\textnormal{ is a transposition in $S_q$}\}.$ Any
permutation can be written as a product of transpositions, so for $g\in T$ we have that $g=t_1t_2\ldots t_\ell$ for some transpositions
$t_1,\ldots,t_{\ell}\in S_q$.  We have just shown that $t_1g=t_1t_1t_2\ldots t_{\ell}=t_2\ldots t_{\ell}\in T$.  Repeating this
argument, we first deduce that $1\in T$, and then that every permutation is in $T$.   
\end{proof}

Let $T$ be a subgroup of $S_q$.  As any group has a regular action on itself by right multiplication, 
it follows from Lemma \ref{start} that $\Diag_q(T)=\{x_y\,:\,y\in T\}$ acts
regularly on $C(T)$.  We also define $$A(T)=\{x_y\sigma(y)\,:\,y\in N_{S_q}(T)\},$$  where 
$N_{S_q}(T)=\{y\in S_q\,:\,T^y=T\}$.  For $x_y\sigma(y)\in A(T)$,
Lemma \ref{start} implies that $\alpha(t)^{x_y\sigma(y)}=\alpha(y^{-1}ty)$ for all $\alpha(t)\in C(T)$.  As
$y\in N_{S_q}(T)$, we deduce that $A(T)\leq\Aut(C(T))_{\alpha(1)}$.  We now prove Theorem \ref{permiff}.

\begin{proof} Suppose that $C(T)$ is diagonally
$X$-neighbour transitive in $H(q,q)$, 
and suppose first that $\delta=2$.  By Remark \ref{regrem}, $C(T)$ is
$1$-regular, and so Lemma \ref{1reg} implies 
that $T=S_q$.  In this case $N_{S_q}(S_q)=S_q$ is $2$-transitive.  Now suppose
that $\delta\geq 3$, and consider the neighbours $\nu(\alpha(1),i_1,i_2)$, $\nu(\alpha(1),j_1,j_2)$
for $i_1\neq i_2$ and $j_1\neq j_2$.
There exists $x=x_y\sigma(z)\in X$ such that
$\nu(\alpha(1),i_1,i_2)^x=\nu(\alpha(1),j_1,j_2)$, 
and as $x\in\Aut(C(T))$, it follows that $\alpha(t)^x\in C(T)$ for all
$\alpha(t)\in T$.
By Lemma \ref{start}, $\alpha(t)^x=\alpha(z^{-1}ty)$, so $z^{-1}ty\in T$ for all
$t\in T$.  In particular, since $T$ is a subgroup, $z^{-1}y\in T$, and so $y^{-1}z\in T$. Hence
$y^{-1}zz^{-1}ty=y^{-1}ty\in T$ for all $t\in T$, that
is, $y\in N_{S_q}(T)$.  Since $y^{-1}z\in T$ it follows that $z\in N_{S_q}(T)$. 
By Lemma \ref{neigact}, 
$\nu(\alpha(1),i_1,i_2)^x=\nu(\alpha(z^{-1}y),i_1^z,i_2^y)$, and because
$\delta\geq 3$ it follows that
$\alpha(z^{-1}y)=\alpha(1)$.  Thus $z=y$, $i_1^z=j_1$ and $i_2^z=j_2$.  In
particular, $N_{S_q}(T)$ acts $2$-transitively on $Q$.  

Now assume that $N_{S_q}(T)$ is $2$-transitive, and let $X=\langle
A(T),\Diag_q(T)\rangle$.  As $\Diag_q(T)$
acts regularly on $C(T)$, it follows that $X$ acts transitively on $C(T)$.  
Consider $\nu(\alpha(1),i_1,i_2)$, $\nu(\alpha(1),j_1,j_2)\in\Gamma_1(\alpha(1))$.  As $N_{S_q}(T)$ is
$2$-transitive, there exists 
$y\in N_{S_q}(T)$ such that $i_1^y=j_1$ and $i_2^y=j_2$.  Let $x=x_y\sigma(y)\in
A(T)$.  By Lemma \ref{neigact},
$\nu(\alpha(1),i_1,i_2)^x=\nu(\alpha(y^{-1}y),i_1^y,i_2^y)=\nu(\alpha(1),j_1,
j_2)$.  Thus $A(T)$ acts transitively
on $\Gamma_1(\alpha(1))$.  Because $X$ acts transitively on $C(T)$, we deduce
that $X$ acts transitively on the
set of neighbours of $C(T)$.  This proves the first statement in Theorem
\ref{permiff}.

Finally suppose that $C(T)$ is a diagonally neighbour transitive code in
$H(q,q)$ and let $p$ be 
a positive integer.  By the previous argument it follows 
that $N_{S_q}(T)$ is $2$-transitive and $C(T)$ is $X$-neighbour transitive with 
$X=\langle A(T),\Diag_q(T)\rangle$.  Moreover $X_{\alpha(1)}=A(T)$ 
acts transitively on $\Gamma_1(\alpha(1))$.  Thus, by Proposition
\ref{replemma}, $\Rep_p(C(T))$ is $(X\times S_p)$-neighbour
transitive in $H(pq,q)$, and because $X\leq \Diag_q(S_q)\rtimes L$ it follows
that $X\times S_p\leq\Diag_{pq}(S_q)\rtimes S_{pq}$.
\end{proof}

\section{Acknowledgements}This research was supported by the Australian
  Research Council Federation Fellowship FF0776186 of the second
  author and also, for the first author, by an Australian Postgraduate Award. 


\end{document}